\newcommand*\xbar[1]{%
  \hbox{%
    \vbox{%
      \hrule height 0.5pt % The actual bar
      \kern0.5ex%         % Distance between bar and symbol
      \hbox{%
        \kern-0.1em%      % Shortening on the left side
        \ensuremath{#1}%
        \kern-0.1em%      % Shortening on the right side
      }%
    }%
  }%
} 
\begin{document}

% =============================================================================
\title{Approximating the Gomory Mixed-Integer Cut Closure Using Historical Data}

\titlerunning{Approximating GMIC Closure Using Historical Data}

\author{}

\author{
   Berkay Becu\inst{1} \and
   Santanu S. Dey\inst{1} \and
   Feng Qiu\inst{2} \and
   \'Alinson S. Xavier\inst{2}
}

\authorrunning{Becu, Dey, Qiu \& Xavier}
\institute{Georgia Institute of Technology, Atlanta, GA, USA \email{bbecu3@gatech.edu,santanu.dey@isye.gatech.edu} \and Argonne National Laboratory, Lemont, IL, USA \email{\{fqiu,axavier\}@anl.gov}}

\maketitle

% =============================================================================

\begin{abstract}
    Many operations related optimization problems involve repeatedly solving similar mixed integer linear programming (MILP) instances with the same constraint matrix but differing objective coefficients and right-hand-side values.
    The goal of this paper is to generate good cutting-planes for such instances using historical data.
    Gomory mixed integer cuts (GMIC) for a general MILP can be parameterized by a vector of weights to aggregate the constraints into a single equality constraint, where each such equality constraint in turn yields a unique GMIC.
    In this paper, we prove that for a family of MILP instances, where the right-hand-side of the instances belongs to a lattice, the GMIC closure for every instance in this infinite family can be obtained using the same finite list of aggregation weights. 
    This result motivates us to build a simple heuristic to efficiently select aggregations for generating GMICs from historical data of similar instances with varying right-hand-sides and objective function coefficients. 
    For testing our method, we generated families of instances by perturbing the right-hand-side and objective functions of MIPLIB 2017 instances. 
    The proposed heuristic
can significantly accelerate the performance of Gurobi for many benchmark instances, even when taking into account the time required to predict aggregation
multipliers and compute the cut coefficients. 
To the best of our knowledge, this
is the first work in the literature of data-driven cutting plane generation that is
able to significantly accelerate the performance of a commercial state-of-the-art
MILP solver, using default solver settings, on large-scale benchmark instances.

\keywords{Mixed-Integer Linear Programming \and Gomory Mixed-Integer Cuts \and Machine Learning}
\end{abstract}

% =============================================================================

\section{Introduction}

Mixed-Integer Linear Programming (MILP) models are essential for solving optimization problems across various industrial sectors, including logistics, manufacturing and power systems.
In many practical settings, decision-makers must solve challenging instances under strict time limits, as the data needed to formulate the problem often becomes available only shortly before a solution is required.
For example, the unit commitment problem, which Independent System Operators solve daily to clear day-ahead electricity markets, requires solutions within minutes, since operators only have a few hours to clear the markets after receiving bids from participants~\cite{xavier2021learning}.
Further applications are discussed in~\cite{johnson2020k,xiaoyi2022exploiting,kocuk2016cycle}.

Operational MILPs often exhibit two key characteristics that suggest a solution approach leveraging historical data.
First, these problems are usually solved on a recurring basis --- often hourly or daily --- creating an opportunity for data collection, analysis and offline processing, which traditional MILP research, focused on one-shot solution methods, has not taken advantage of.
Second, instances of these problems tend to be structurally similar: decisions are made over the same set of variables, and the constraint matrix typically remains the same, as it encodes decision logic or invariant physical resources.
What usually varies between instances are constraint right-hand-sides and objective function coefficients, reflecting fluctuations for example in costs, revenues, demands and availability of different resources.
Thus, while each instance may be challenging to solve independently, from scratch, if instances are viewed as random samples drawn from a particular probability distribution, it may be feasible to leverage offline processing and historical information to accelerate the solution of new instances drawn from this distribution.

Many papers have proposed data-driven methods to construct high-quality feasible solutions~\cite{xavier2021learning,bertsimas2022online,johnson2020k} and improve the performance of branching rules in branch-and-bound methods \cite{alvarez2017machine,nair2020solving,gupta2020hybrid,xiaoyi2022exploiting}, as well as conduct sensitivity analysis with respect to changing right-hand-sides~\cite{cifuentes2024sensitivity}.
Recent work has also explored data-driven cutting plane generation, which we consider in this manuscript.
In the following, we provide only a brief review of this literature, and we refer to \cite{deza2023machine} for a more complete survey.
One of the pioneering works in this area is by Tang et al.~\cite{tang2020reinforcement}, which models the cut selection process as a Markov decision process and proposes the use of reinforcement learning.
Paulus et al.~\cite{paulus2022learning} apply imitation learning instead, where a greedy algorithm acts as the expert to select an individual cut that maximizes gap closure.
Several studies have trained neural networks to select multiple cuts simultaneously~\cite{huang2022learning,wang2023learning}, instead of adding one cut at a time.
Other research~\cite{turner2022adaptive,berthold2022learning} focus on learning cut-related solver parameters tailored to specific instances.
On the theoretical side, Balcan et al.~\cite{balcan2022structural} derive upper bounds on the sample size required to estimate the expected branch-and-bound tree size when one \emph{Gomory mixed integer cut} (GMIC) is added, Cheng et al.~\cite{cheng2024blearning} further study sample complexity for learning multiple cutting planes using neural networks, and Cheng et al.~\cite{cheng2024learning} provide such bounds for  learning general group cuts.
Several methods consider generating cutting-planes directly using machine learning.
Chetelat et al.~\cite{chetelat2023continuous} show how the separation problem of GMI cuts can be reformulated as a training problem using ReLU activation functions.
Dragotto et al.~\cite{dragotto2023differentiable} introduce a ``cutting-plane layer'', a differentiable generator that maps problem data and previous iterates to cutting planes.
Finally, and most recently, Guage et al.~\cite{guaje2024machine} present a hybrid framework where an ML model simplifies the cut-generating optimization problem.

In this paper, our main goal is to show how one may exploit historical data to generate effective cutting planes for families of similar MILP instances.
More specifically, we aim to accelerate MILP solver performance by learning to generate cuts that approximate the Gomory Mixed-Integer Cut (GMIC) closure for families of instances with same constraint matrix, but varying right-hand-sides and objective functions.
As we further discuss in Section~\ref{sec:cutclosure}, GMIC cuts were among the first cutting planes developed for solving MILPs and their closure has many desirable theoretical and computational properties.
Concretely, the GMIC closure is known to be polyhedral~\cite{cook1990chvatal}, rank-1 GMI cuts are substantially more numerically stable than higher-rank cuts, and computational studies on MIPLIB instances have shown that they effectively close a significant portion of the integrality gap~\cite{balas2008optimizing,dash2010heuristic}.
However, exact separation over the GMIC closure is NP-hard~\cite{caprara2003separation,lee2019np}, and heuristics such as those in~\cite{dash2010heuristic,fischetti2013approximating} remain too computationally expensive to be integrated into state-of-the-art commercial MILP solvers.

We have two main contributions in this manuscript.
First, in Section~\ref{sec:main}, we show a new finiteness result regarding GMI closures for infinite families of MILPs:
given an infinite family of MILPs where each instance of the family has the same constraint matrix and the right-hand-sides of these instances belong to a lattice, we show that there exists a finite list of constraint aggregation multipliers that produces the GMIC closure for each instance in this family.
Second, inspired by this theoretical result, in Section~\ref{sec:heur} we propose a simple new heuristic for learning to generate constraint aggregation multipliers in order to approximate the GMIC closure.
In Section~\ref{sec:compute}, we conduct extensive computational experiments on randomly perturbed MIPLIB 2017~\cite{gleixner2021miplib} instances and show that the proposed heuristic can significantly accelerate the performance of Gurobi for many benchmark instances, even when taking into account the time required to predict aggregation multipliers and compute the cut coefficients.
To the best of our knowledge, this is the first work in the literature of data-driven cutting plane generation that is able to significantly accelerate the performance of a commercial state-of-the-art MILP solver, using default solver settings, on large-scale benchmark instances.
The proposed methods have been made publicly available as part of the open-source MIPLearn software package~\cite{xavier2020miplearn}.

% =============================================================================
 
\section{The Gomory Mixed-Integer Cut Closure}\label{sec:cutclosure}

Cutting-planes are an integral tool for improving the dual bounds in modern state-of-the-art MILP solvers~\cite{bixby2002solving,dey2018theoretical}.
One of the first classes of cutting-planes invented for MILPs was that of the Gomory Mixed-Integer Cuts (GMIC)~\cite{gomory1960algorithm,balas1996gomory,richard2010group,cornuejols2007revival}.
Given a single constraint,
\begin{eqnarray*}
S:=\left\{ x \in \mathbb{R}^{n}_{+}\,\left|\,\sum_{i \in [n]}\alpha_jx_j = \beta, \ x_j \in \mathbb{Z} \textup{ for }j \in J \right.\right\},
\end{eqnarray*}
where $J \subseteq [n],$ the GMIC is a valid inequality for $S$, which we denote as $\textup{GMIC}(S)$, and is obtained as:
\begin{eqnarray}\label{eq:GMIC}
\left.\begin{array}{l}\sum_{j \in J, f(\alpha_j) \leq f(\beta)}\frac{f(\alpha_j)}{f(\beta)}x_j + \sum_{i \in J, f(\alpha_j) > f(\beta)}\frac{1 - f(\alpha_j)}{1 -f(\beta)}x_j \\
+      
\sum_{j \in [n]\setminus J, \alpha_j \geq 0}\frac{\alpha_j}{f(\beta)}x_j +
\sum_{j \in [n]\setminus J, \alpha_j < 0}\frac{-\alpha_j}{1 - f(\beta)}x_j \geq 1, 
\end{array}\right\} \textup{GMIC}(S)
\end{eqnarray}
where $f(u) = u - \lfloor u \rfloor$.
Henceforth, for simplicity, we will also refer to the half-space described by the inequality (\ref{eq:GMIC}) as $\textup{GMIC}(S)$.
Moreover, if $\beta \in \mathbb{Z}$, then the inequality (\ref{eq:GMIC})  is not well-defined, in which case $\textup{GMIC}(S)$ will simply refer to $\mathbb{R}^n_{+}$. 
When $S$ is a row of the simplex tableau corresponding to basic integral variable, and the right-hand-side $\beta$ is fractional, then it is easy to see that the GMIC separates the current basis feasible solution.

A \emph{cutting-plane closure} is informally the set obtained by simultaneously adding all cuts that can be derived using a given type of cutting-plane procedure.
The practical importance of closures is closely related to the history of GMIC cuts.
In his seminal finite cutting-plane algorithm~\cite{gomory1963algorithm}, Gomory prescribes iteratively adding a \emph{single} cut that separates the current fractional optimal solution to the linear programming (LP) relaxation, and then re-resolving the updated linear program.
By the 1990s, several experimental studies using this \emph{single-cut} approach with GMICs, lead researchers, including Gomory himself, to mistakenly believe that GMIC were ``useless in practice"~\cite{cornuejols2007revival}.
One of the most exciting advances in the field of MILPs is a result by Balas et al.~\cite{balas1996gomory}, where they  debunked this belief by showing that GMICs perform very well in practice when inequalities are added \emph{simultaneously} from all fractional rows corresponding to integral basic variables of the optimal tableaux.
Although cut closures were already theoretically well-studied by the time of~\cite{balas1996gomory}, this work solidified their practical importance.

Given a general MILP in the following form:
\begin{eqnarray}\label{eq:standard}
\textup{IP}:=  \{x \in \mathbb{R}^{n}_+\,|\, Ax = b, \ x_j \in \mathbb{Z}\textup{ for } j \in J\},
\end{eqnarray}
where $A \in \mathbb{Q}^{m \times n}$ and $b \in \mathbb{Q}^m$, the GMIC closure is obtained by applying the GMIC procedure as described in (\ref{eq:GMIC}) to all single-constraint relaxations of $\textup{IP}$.
That is, the GMIC closure, denoted by $\mathcal{G}(\textup{IP})$, is given by:
\begin{eqnarray}\label{ex:GMIclosure}
\mathcal{G}(\textup{IP}):=\bigcap_{\lambda \in \mathbb{R}^m} \textup{GMIC}\left(\textup{IP}_\lambda \right),
\end{eqnarray}
where
\begin{eqnarray}\label{eq:lambda}
\textup{IP}_\lambda := \left\{x \in \mathbb{R}^{n}_+\,|\,\lambda^{\top}Ax = \lambda^{\top}b, \ x_j \in \mathbb{Z}\textup{ for } j \in J\right\}.   \end{eqnarray}
We will refer to $\lambda \in \mathbb{R}^m$ in (\ref{eq:lambda}) as an aggregation multiplier~\cite{bodur2018aggregation}.
All the cuts above are considered \emph{rank-1} as they can be directly generated from the original problem constraints.

The GMIC closure has various desirable theoretical and computational properties.
For example, the GMIC closure has interestingly been shown to be equivalent to various other cutting plane closures, including the mixed integer rounding inequality closure~\cite{nemhauser1990recursive,marchand2001aggregation,dash2010mir} and the split disjunctive cut closure~\cite{balas1979disjunctive,cook1990chvatal}, which highlights its theoretical importance.
Moreover, these closures have been shown to be polyhedral, with a finite number of the cuts dominating all the other cuts~\cite{cook1990chvatal,andersen2005split,vielma2007constructive,dash2010mir,averkov2012finitely}. 
Computationally, the strength of GMIC closures was empirically demonstrated through multiple studies on standard benchmark libraries~\cite{balas2008optimizing,dash2010mir}.
In particular, on MIPLIB 2003 instances~\cite{achterberg2006miplib}, Balas and Saxena~\cite{balas2008optimizing} showed that these cuts close more than 70\% of the integrality gap on average.
Another advantage of using rank-1 GMI cuts, in contrast to higher-rank cuts, is that they do not suffer significantly from the numerical challenges~\cite{cornuejols2013safety}. 

While the above theoretical and computational results make a compelling case for using the GMIC closure, in general this closure is very hard to build.
Indeed, separating an arbitrary point in the linear programming (LP) relaxation using a split cut (equivalent to GMI cut) is NP-hard~\cite{caprara2003separation,lee2019np}.
This challenge led to a number of research projects on approximating these closures through heuristic algorithms~\cite{dash2010heuristic,fischetti2013approximating}.
However, to the best of our knowledge, these heuristics remain too computationally expensive for most problems, and have not been integrated into commercial state-of-the-art MILP solvers.

% =============================================================================

\section{The GMIC Closure for an Infinite Family of MILPs}\label{sec:main}

In this paper, we choose to learn  aggregation multipliers $\lambda \in \mathbb{R}^m$ from historical data, which are then used to produce  one-constraint relaxations \eqref{eq:lambda} of the current instance under consideration.
We then apply \eqref{eq:GMIC} to these one-constraint relaxations to obtain GMICs for the current instance.

A basic question is the following: What ``information'' from historical data, if any, could be exploited to learn good aggregation multipliers?
For example, consider two integer programs:
$$
  \textup{max}\{c^{\top}x \,|\,Ax = b, x \in \mathbb{R}^n_{+}, x_j \in \mathbb{Z}, j \in J\},
$$
and its perturbation:
$$
  \textup{max}\{(c + \delta)^{\top}x \,|\,Ax = b + \varepsilon, x \in \mathbb{R}^n_{+}, x_j \in \mathbb{Z}, j \in J\},
$$
where $\delta \in \mathbb{R}^n$ and $\varepsilon \in \mathbb{R}^m$ are small perturbations.
If the LP relaxation of the first MILP is integral, then there are no interesting GMICs to be generated, that is there are no aggregation multipliers that lead to useful GMICs.
On the other hand, for arbitrarily small values of $\epsilon$, the LP relaxation of the second MILP may not be integral, and thus there may be several GMICs that are useful.
Therefore,  simple perturbation can change the GMIC closure dramatically. 

Somewhat counter-intuitively, the next theorem establishes that if we examine an \emph{infinite family of MILPs} with same constraint matrix and with right-hand-sides belonging to a lattice (so not arbitrarily close-by right-hand-sides) generated by rational vectors, then there is a \emph{finite list} of multipliers $\lambda$ that generate the GMIC closure of every instance. 

Consider the lattice $\Gamma \subseteq \mathbb{R}^m$ generated by rational vectors $b^1, \dots, b^k$, that is
$$\Gamma = \left\{\sum_{i = 1}^kz_i b^i\,|\, z_i \in \mathbb{Z}\ \forall i \in [k]\right\}.$$ 
For the rest of this section, we assume without loss of generality that all the data in (\ref{eq:standard}) is integral, that is $A \in \mathbb{Z}^{m \times n}$ and $b \in \mathbb{Z}^m$. 
We study the following parametric family of MILPs with varying right-hand-sides:
\begin{eqnarray}\label{eq:inffam}
\textup{IP}(\gamma):= \left\{ x\in \mathbb{R}^n_+\,\left|\, Ax = b + \gamma, \ x_j \in \mathbb{Z} \textup{ for }j \in J\right.\right\} \ \forall \gamma \in \Gamma.
\end{eqnarray}
With this notation, $\textup{IP}$ denoted in (\ref{eq:standard}) is equal to $\textup{IP}(0)$.

\begin{theorem}\label{thm:1}
Let $\Gamma$ be the lattice generated by rational vectors $b^1, \dots, b^k \in \mathbb{Q}^m$. Consider the infinite family of instances corresponding to $\Gamma$ as described in~(\ref{eq:inffam}).
Then 
there exists a finite set $\Lambda \subseteq \mathbb{R}^m$, such that the GMIC closure of every instance $\textup{IP}(\gamma)$ can be obtained using aggregation multipliers in $\Lambda$, that is:
$$\mathcal{G}(\textup{IP}(\gamma)) = \bigcap_{ \lambda \in \Lambda} \textup{GMIC}\left(\textup{IP}(\gamma)_\lambda\right)\quad \forall \gamma \in \Gamma.$$
\end{theorem}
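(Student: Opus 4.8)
The plan is to show that a single bounded set of \emph{bounded-denominator} aggregation multipliers already generates every closure in the family, and that such multipliers can produce only finitely many distinct cuts in total.

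First I would isolate the only structural fact about GMICs that the argument needs: by the formula~(\ref{eq:GMIC}), the inequality $\textup{GMIC}(\textup{IP}(\gamma)_\lambda)$ depends on $\lambda$ and $\gamma$ only through the vector $\alpha := \lambda^\top A$ and the scalar $\theta := f(\lambda^\top(b+\gamma))$; for $j\in J$ only $f(\alpha_j)$ enters, for $j\in[n]\setminus J$ only $\alpha_j$ and its sign enter, and the right-hand-side enters only through $\theta$. The crucial observation is that $\alpha=\lambda^\top A$ does \emph{not} depend on $\gamma$. Since infeasible instances are trivial, I assume $b+\gamma\in\operatorname{colspace}(A)$; writing $\lambda=\lambda_R+\lambda_N$ with $\lambda_N\in\ker(A^\top)$ and $\lambda_R\in\operatorname{colspace}(A)\subseteq\mathbb{R}^m$, the component $\lambda_N$ satisfies $\lambda_N^\top A=0$ and $\lambda_N^\top(b+\gamma)=0$, so it changes neither $\alpha$ nor $\theta$, and I may restrict to $\lambda\in\operatorname{colspace}(A)$. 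In the pure-integer case $J=[n]$ the cut depends on $\lambda$ only through the fractional data $(f(\alpha_j))_{j\in J}$ and $\theta$; in the mixed case the actual values $\alpha_j$ for continuous $j$ also enter, and I will need these to be bounded, which I address together with the denominator bound below.

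The heart of the argument is a finiteness-by-denominators step. Let $D\in\mathbb{Z}_{>0}$ clear the denominators of the generators, so that $\Gamma\subseteq\frac{1}{D}\mathbb{Z}^m$. I claim there are integers $\Delta=\Delta(A)\ge 1$ and $R=R(A)\ge 1$, depending only on $A$, such that the full closure is already obtained from multipliers $\lambda\in\frac{1}{\Delta}\mathbb{Z}^m$ with $|\alpha_j|\le R$ for every continuous $j\notin J$. Granting this, the payoff is immediate and uniform in $\gamma$: for such $\lambda$ and $A,b$ integral one has $\alpha_j=\lambda^\top A_j\in\frac1\Delta\mathbb{Z}$, hence $f(\alpha_j)\in\frac1\Delta\mathbb{Z}\cap[0,1)$ takes finitely many values; the bounded continuous coordinates $\alpha_j\in\frac1\Delta\mathbb{Z}\cap[-R,R]$ likewise take finitely many values; and $\lambda^\top(b+\gamma)\in\frac{1}{\Delta D}\mathbb{Z}$, so $\theta\in\frac{1}{\Delta D}\mathbb{Z}\cap[0,1)$ takes finitely many values. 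Thus the data $\big((f(\alpha_j))_{j\in J},(\alpha_j)_{j\notin J},\theta\big)$ ranges over a \emph{finite} set as $\lambda$ and $\gamma$ vary over the entire family, so only finitely many distinct inequalities $\textup{GMIC}(\textup{IP}(\gamma)_\lambda)$ occur in total; choosing one multiplier realizing each of them yields the finite list $\Lambda$, and by construction $\bigcap_{\lambda\in\Lambda}\textup{GMIC}(\textup{IP}(\gamma)_\lambda)=\mathcal{G}(\textup{IP}(\gamma))$ for every $\gamma\in\Gamma$.

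The main obstacle is justifying this bounded reduction with bounds $\Delta,R$ that are independent of the right-hand-side. Here I would use the polyhedrality, rationality, and finite generation of the GMIC closure together with its equivalence to the split-disjunctive closure, recalled in Section~\ref{sec:cutclosure}~\cite{cook1990chvatal,nemhauser1990recursive,andersen2005split,averkov2012finitely}: each facet of $\mathcal{G}(\textup{IP}(\gamma))$ is a rank-$1$ split cut, and I would argue that the split directions producing undominated cuts, as well as the corresponding aggregations, can be controlled through the basic solutions $A_B^{-1}(b+\gamma)$ and tableau rows $A_B^{-1}A$, whose entries have denominators dividing $\det A_B$; taking $\Delta$ to be a common multiple of the subdeterminants of the \emph{fixed} matrix $A$ then makes the bound depend only on $A$, while large continuous coefficients are dominated and so supply the magnitude bound $R$. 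Establishing this right-hand-side-independent bound rigorously---in particular controlling, uniformly over the growing polyhedra $P_\gamma=\{x\ge 0: Ax=b+\gamma\}$, which all share the fixed recession cone $\{x\ge 0: Ax=0\}$, exactly which bounded-denominator aggregations are needed to recover every facet---is the step that requires the most care, and is where the lattice hypothesis $\Gamma\subseteq\frac1D\mathbb{Z}^m$ and the integrality of $A$ do the essential work.
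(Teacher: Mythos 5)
Your overall architecture --- reduce to aggregation multipliers with bounded denominators and bounded ``continuous'' coefficients, then observe that the cut data $\bigl((f(\alpha_j))_{j\in J},(\alpha_j)_{j\notin J},\theta\bigr)$ can take only finitely many values --- is the right shape, and the counting step at the end is essentially sound (modulo the fact that a single $\lambda$ produces \emph{different} cuts for different $\gamma$, since $\theta$ depends on $\gamma$; ``choosing one multiplier realizing each cut'' therefore does not immediately yield a list that works for all $\gamma$ simultaneously, and you would need to argue that the relevant multipliers form finitely many equivalence classes modulo the dual lattice of $\Gamma$). But the central claim --- the existence of $\Delta(A)$ and $R(A)$, \emph{independent of the right-hand-side}, such that multipliers in $\frac{1}{\Delta}\mathbb{Z}^m$ with $|\alpha_j|\le R$ for continuous $j$ already generate every closure in the family --- is exactly the content of the theorem, and you do not prove it. You explicitly defer it (``I would use\dots'', ``I would argue\dots'') and acknowledge it is the step requiring the most care. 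Polyhedrality and finite generation of $\mathcal{G}(\textup{IP}(\gamma))$ for each fixed $\gamma$ give no uniformity across the infinite family, which is precisely the difficulty; and the route you sketch via optimal tableau rows $A_B^{-1}A$ and basic solutions $A_B^{-1}(b+\gamma)$ is not obviously workable, since the set of relevant bases and the identity of the undominated cuts can a priori change with $\gamma$.

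For comparison, the paper closes this gap with a concrete, self-contained argument in the split-cut language: every GMIC arises from a ``legitimate'' pair $u=(\lambda,v)$, undominated cuts have support forming a linearly independent system (Lemma~\ref{lemma:1}), and any legitimate $u$ can be decomposed inside a sign-restricted cone $\mathcal{C}_{\textup{sign}(u)}$ (which does not depend on the right-hand-side) as $u=u^1+u^2$ via Carath\'eodory applied to integrally rescaled extreme rays, where $u^2$ is integral with every entry divisible by $M(\Gamma)$ --- so that $(\theta^2)^{\top}(b+\gamma)\in\mathbb{Z}$ for \emph{all} $\gamma\in\Gamma$ simultaneously --- and $\|u^1\|_\infty\le (m+n)M(\Gamma)\Delta$. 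The cut from $u^1$ then dominates the cut from $u$ for every instance in the family at once, which is the uniform bounded reduction your proposal needs. This is where the lattice hypothesis genuinely enters (through $M(\Gamma)$), and it is the piece missing from your argument.
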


\begin{remark}
    Dash et al.~\cite{dash2016polyhedrality} show that the split closure for a union of finite number of polyhedron can be described finitely.
    These results are not directly applicable in our case due to incompatible definitions.
    Another way to prove a result similar to Theorem~\ref{thm:1} is to consider a family of instances  where we apply unimodular transformations to the LP relaxation~\cite{dash2016polyhedrality}. %that is mappings that are simultaneously bijections between $\mathbb{R}^n$ and $\mathbb{R}^n$, and $\mathbb{Z}^n$ and $\mathbb{Z}^n$.  
    However, in this case we would be restricted to the right-hand-side being a lattice generated by the columns of the matrix $A$ and not an arbitrary lattice as presented in Theorem~\ref{thm:1}.
\end{remark}

In the case of pure integer programming, we can obtain a stronger result showing that there exist a finite set of multipliers (\ref{eq:lambda}) which gives the GMIC closure \emph{for all right-hand-sides.}

\begin{corollary}\label{cor:pureIP}
    Consider $\textup{IP}(\gamma)$ as defined in  (\ref{eq:inffam}), where we allow $\gamma \in \mathbb{R}^m$. Let 
    $J = [n]$, that is all variables are integral. Then there exists a finite set of multipliers $\Gamma$ such that 
    $$\mathcal{G}(\textup{IP}(\gamma)) = \bigcap_{ \lambda \in \Lambda} \textup{GMIC}\left(\textup{IP}(\gamma)_\lambda\right)\quad \forall \gamma \in \mathbb{R}^m.$$
\end{corollary}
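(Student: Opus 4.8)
The plan is to reduce the statement to Theorem~\ref{thm:1} by splitting the right-hand-side perturbations into two cases according to whether $\gamma$ is integral, and to observe that the non-integral case collapses to an infeasibility that a single unit-vector aggregation already detects. Throughout I use the standing assumption $A\in\mathbb{Z}^{m\times n}$, $b\in\mathbb{Z}^m$ and the hypothesis $J=[n]$.

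First I would dispose of the integral perturbations. The integer lattice $\mathbb{Z}^m$ is generated by the rational vectors $e_1,\dots,e_m$ (the standard unit vectors), so Theorem~\ref{thm:1} applies verbatim with $\Gamma=\mathbb{Z}^m$ and yields a finite set $\Lambda_1\subseteq\mathbb{R}^m$ such that $\mathcal{G}(\textup{IP}(\gamma))=\bigcap_{\lambda\in\Lambda_1}\textup{GMIC}(\textup{IP}(\gamma)_\lambda)$ for every $\gamma\in\mathbb{Z}^m$. This already handles a full-dimensional lattice of right-hand-sides; what remains is to show that the finitely many ``missing'' right-hand-sides $\gamma\in\mathbb{R}^m\setminus\mathbb{Z}^m$ can be absorbed at the cost of only $m$ additional aggregation multipliers.

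Next I would treat $\gamma\notin\mathbb{Z}^m$. Since $b\in\mathbb{Z}^m$, we have $b+\gamma\notin\mathbb{Z}^m$, so there is an index $i$ with $(b+\gamma)_i\notin\mathbb{Z}$. Aggregating with $\lambda=e_i$ produces the single-constraint relaxation whose coefficient vector is the $i$-th row of $A$, which is integral, and whose right-hand-side $(b+\gamma)_i$ is fractional. Here the pure-integer hypothesis $J=[n]$ enters decisively: because every variable is integer, each coefficient $\alpha_j$ satisfies $f(\alpha_j)=0\le f(\beta)$, and the cut in \eqref{eq:GMIC} degenerates to $\sum_{j}0\cdot x_j\ge 1$, i.e.\ $0\ge 1$. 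Hence $\textup{GMIC}(\textup{IP}(\gamma)_{e_i})=\emptyset$, which forces both the full closure $\mathcal{G}(\textup{IP}(\gamma))$ and any finite intersection containing $e_i$ to be empty. Taking $\Lambda=\Lambda_1\cup\{e_1,\dots,e_m\}$, which is finite, therefore reproduces $\mathcal{G}(\textup{IP}(\gamma))$ in both cases and for every $\gamma\in\mathbb{R}^m$.

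The step I expect to be the genuine crux—and the one that truly uses $J=[n]$—is the second case. In the mixed-integer setting the same aggregation $e_i$ would leave the continuous-variable coefficients $\alpha_j/f(\beta)$ and $-\alpha_j/(1-f(\beta))$ intact, so the resulting GMIC need not collapse to $0\ge 1$ and the clean infeasibility certificate would be lost; indeed, the remark preceding the corollary already illustrates how arbitrarily small non-lattice perturbations can change the closure drastically. I would therefore emphasize that the whole dichotomy hinges on the fact that, for pure integer programs with integral data, a fractional aggregated right-hand-side is an immediate and finitely certifiable proof of infeasibility, which is exactly what lets us extend Theorem~\ref{thm:1} from the lattice $\mathbb{Z}^m$ to all of $\mathbb{R}^m$.
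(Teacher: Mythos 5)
Your proposal is correct and follows essentially the same route as the paper's proof: apply Theorem~\ref{thm:1} with $\Gamma=\mathbb{Z}^m$ generated by the unit vectors for integral $\gamma$, and for non-integral $\gamma$ add the unit-vector multipliers $e^1,\dots,e^m$ so that the GMIC from the fractional row certifies infeasibility. Your explicit verification that the pure-integer GMIC degenerates to $0\ge 1$ is a detail the paper leaves implicit, but the decomposition and the role of $J=[n]$ are identical.
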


\begin{remark}
Corollary~\ref{cor:pureIP} is similar in flavor to a result obtained by Wolsey~\cite{wolsey1981b} where it is shown that there is a finite list of subadditive functions that give the convex hull of pure integer programs for all right-hand-sides. 
\end{remark}

\section{Learning Heuristic}\label{sec:heur}
Theorem~\ref{thm:1}  suggests a simple learning heuristic.
When solving a distribution of MILP instances that have a fixed constraint matrix, one can sample multiple \emph{training instances} from this distribution, compute the aggregation multipliers that yield the GMIC closure for each training instance, then reuse these multipliers when solving new \emph{test instances} drawn from the distribution.

\paragraph{Cut Collection.} The first consideration in using this approach is determining how to generate aggregation multipliers for a given training sample --- we call this our \emph{cut collection algorithm}.
In this manuscript, we adopt a simplified version of the procedure by Fischetti and Salvagnin~\cite{fischetti2013approximating}, which generates GMIC cuts from multiple tableau bases.
Below, we provide a high-level summary of this method; for a more formal description and additional remarks, see Appendix~\ref{section:algorthm}.
The procedure begins by solving the LP relaxation of the problem and generating GMIC cuts for the optimal basis.
These cuts are then added as constraints into the LP relaxation, which is resolved, and dual values for the newly added constraints are computed.
Next, using these dual values as penalty, the cuts are transferred to the objective function of the LP relaxation, which is then resolved.
Finally, GMIC cuts are generated for this updated basis, and the process continues iteratively until a specified iteration limit is reached or a previously visited basis recurs.
We also have steps to eliminate cuts that became dominated as the algorithm progresses.

\paragraph{Cut Selection.}  A challenge in the design of cutting plane methods is deciding how many cuts to add.
It is well known that adding too many cuts can lead to poor algorithm performance~\cite{andreello2007embedding,dey2018theoretical}, not only due to the computational cost of generating cuts but also due to their impact on the size and density of the LP relaxation.
To achieve a better balance between gap closure and computational efficiency, we propose using multipliers collected only from a subset of training instances.
However, two questions remain:
how many training instances should be used, and how should they be selected?
To answer these questions, we conduct computational experiments with varying numbers of training instances, and we explore different selection strategies based on similarity: most-similar, most-different, and random instance selection.

\section{Computational Experiments}\label{sec:compute}

To evaluate the performance of the learning heuristic described in Section~\ref{sec:heur}, we implemented it in Julia/JuMP within the MIPLearn framework and conducted comprehensive experiments on randomly perturbed MIPLIB 2017 instances.

\subsection{Software and Hardware}

We utilized a single workstation computer (Ryzen 9 7950x, 16 cores, 32 threads, 128 GB DDR5) to generate all necessary training data and conduct all benchmarks.
During the training phase, we used Gurobi Optimizer 11.0.2 as our LP solver, whereas MIPLearn was used to convert problems into standard form, compute the tableau and generate GMI cuts.
In the testing phase, Gurobi acted as the MILP solver, receiving cuts computed by MIPLearn through a cut callback.
For all runs, Gurobi was configured to use a single thread, and 16 problems were solved in parallel at a time.
All other solver settings, including cut generation and pre-solve, were left at their default configurations.
To minimize solver performance variability, we used three random seeds and measured \emph{work units}, in addition to running time.
Additionally, since this manuscript focuses on the dual side of the solution process, Gurobi was provided with the optimal solution to all problems during testing.

\subsection{Benchmark Set Generation} 

To create realistic homogeneous families of MILP problems, we generated 55 variations (50 for training, 5 for testing) of each MIPLIB 2017 benchmark instance~\cite{gleixner2021miplib} by perturbing objective coefficients and right-hand-side values.
We outline the high-level approach below.
See Appendix~\ref{sec:InsPerturb} for details.

\paragraph{Right-Hand-Side (RHS) Changes.} 
If a constraint has at least one continuous variable or is an inequality, then we perturb the
right-hand-side value $b_i$ to $b_i \cdot r$ where $r$ is uniformly sampled from
$[0.9, 1.1]$.
If the constraint is an equality constraint with integer variables only, then we apply additive perturbation of $\{-1,0,1\}$.
Note that these perturbations may fail due to two reasons:
(i) none of the constraints satisfy the requirements;
(ii) the perturbed problem becomes infeasible.
If this happens (on any of the 55 generated variations), we disable RHS perturbation for this particular instance.

\paragraph{Objective Changes.}
We perturb each objective coefficient $c_j$ to $c_j\cdot r$ where $r$ is uniformly randomly selected  from $[0.75,1.25]$.
Note that, in some MIPLIB instances, the objective function contains a single variable, in which case the above perturbation does not produce any change.

\paragraph{Number of Perturbed Instances.}
Out of the 240 benchmark instances in MIPLIB 2017, we discarded 20 instances due to infeasibility, being very large, or being very difficult (requires more than $4$ hours to achieve an integer feasible solution). 
Out of the remaining 220 instances, we discovered that on 34 instances the perturbation rules failed to create different variations, leaving us with 186 instances where we were either successful in changing the RHS values (8 instances) or objective coefficients (105 instances) or both (73 instances).

\paragraph{Perturbation Quality.}
We solved the five test variations of the 186 instances, then observed optimal objective function values and solution diversity, focusing on the integer part of the optimal solution. 
Out of 186 instances, 178 showed at least two distinct optimal solutions (integer part). 
Notably, 157 instances had five distinct solutions, indicating that the rules were overall successful at generating instances with distinct optimal solutions and values. 
For only 8 instances, the rules produced variations with identical optimal solutions (integer part). 

\paragraph{Changes in Problem Difficulty.} 
A potential concern with the proposed perturbation rules was that the resulting instance variations might become easier or trivial to solve.
On the contrary we discovered that the variations, on average, became more difficult to solve. 
In particular, the arithmetic (geometric resp.) mean of solution times increased from 911 seconds to 1868 seconds (174 seconds to 294 seconds resp.) and similar observations were made for the number of nodes in the branch-and-bound tree.

\subsection{Expert Method Performance}\label{sec:perexpert}

Before benchmarking the performance of the learning heuristic, we begin by evaluating a theoretical \emph{expert method} to gauge what potential speedups are achievable with this approach and to identify which MIPLIB 2017 instances could potentially benefit.
Given a \emph{test} instance, the \emph{expert} method first runs the cut collection algorithm to generate GMI cuts, then invokes Gurobi to solve the instance, providing the cuts to the solver though a cut callback.
Specifically, all collected cuts are simultaneously provided exactly once, when the cut callback is called for the first time.
However, the time spent collecting the cuts is not counted towards the running time of the expert method, to simulate the ideal results that a ``perfect'' learning heuristic could achieve.

\paragraph{Eliminated Instances.} While benchmarking the expert method, 48 additional were eliminated, mostly due to the cut collection procedure returning zero cuts, but also due to some memory/time limits and a few numerical errors.
See Table~\ref{tab:collectfail} for a more detailed breakdown.
Zero cuts are typically generated due to dual degeneracy, as it is possible that there is no improvement in dual bound after a single round of cuts and the dual multipliers corresponding to the cuts are then zero. 
In addition to the instances listed in Table~\ref{tab:collectfail}, on three instances we observed numerical issues during the benchmark procedure after cut collection.

\paragraph{Cut Performance within Gurobi.}
On the remaining 135 instances, we evaluate the effectiveness of the cuts based on Gurobi's runtime, the number of branch-and-bound nodes, and work units, a deterministic measure reflecting time spent on optimization.
For each of the five test variations and three random seeds, we solved each instance using Gurobi both with and without cuts, gathering average statistics.
To classify instances with respect to cut performance, we calculated speedup as the ration between average work units with and without cuts.
Instances were labeled \emph{positive} if the average speedup was above 1.01x, \emph{negative} if below 0.99x, and \emph{neutral} otherwise.
Our results showed that 50 instances were positive, 36 were neutral and 49 were negative.
See Table~\ref{tab:expert} in the Appendix for more detailed results.
These findings suggest that, \emph{if we could ignore the computational cost of generating them}, these cuts could improve solver performance for a substantial number of MIPLIB 2017 instances.
However, when accounting for the time required to collect the cuts, most positive instances become negative, which motivates our learning approach.

\subsection{Learning Heuristic Performance}\label{sec:ml}

In this subsection, we evaluate the performance of the learning heuristic, focusing on the 50 benchmark instances in which the \emph{expert} provided positive results.

\paragraph{Learning Setting.}
Recall that, for each of the 50 benchmark instances considered, we have 50 training and 5 test variations.
During an offline training phase, we ran the cut collection method on all training variations and recorded the generated cuts.
More specifically, for each generated cut, we stored the basis and corresponding tableau row that was used to generate the cut, which can be seen as an alternative method of storing the necessary constraint aggregation multipliers.
During the test phase, as before, the cuts were generated and provided to Gurobi via a cut callback function.
Also, as before, each problem was solved three times with different random seeds and average statistics were collected. 

\paragraph{Evaluated Methods.}

As discussed in Section~\ref{sec:heur}, we would like to test two aspects of our learning heuristics.
First, the number of cuts to add, which, in our case, depends primarily on the number $k$ of training variations from which to select aggregation multipliers.
To cover a wide range of settings, while keeping the time required to run the computational experiments manageable, we chose $k = \{1, 10, 50\}$ variations.
Second, the strategy for selecting training variations.
Specifically, we considered:
(i) $k$-closest training variations to the test instance, (ii) $k$-farthest variations, and (iii) $k$-random variations.
Based on these criteria, we tested the following configurations: (i) 1-closest, (ii) 10-closest, (iii) 10-farthest, (iv) 10-random, and (v) 50-closest, effectively using all training variations to select aggregation multipliers.
To measure instance similarity, each instance variation was represented by a feature vector consisting of right-hand-side values and objective coefficients and distances were measured using the 2-norm.
Since these features can vary significantly in magnitude, we applied standard scaling by removing the mean and scaling to unit variance.

\begin{table}[tbp]
  \centering
  \caption{Average performance over all 50 benchmark instances.}
 \setlength{\tabcolsep}{8pt}
\begin{tabular}{lrrrrrr}
\toprule
 & Time (s) & Work & Nodes & Cuts & \multicolumn{2}{c}{Speedup} \\
\cmidrule{6-7} Method & & & &  & Time & Work \\
\midrule
ml:near:1 & 1,110.08 & 2,024.94 & 571,120.15 & 122.52 & 1.03 & 1.11 \\
expert & 1,154.88 & 1,924.89 & 701,039.03 & 122.17 & 1.02 & 1.20 \\
baseline & 1,219.13 & 2,035.39 & 719,464.26 & --- & 1.00 & 1.00 \\
ml:near:10 & 1,124.44 & 1,950.01 & 564,413.93 & 1,233.31 & 0.93 & 1.28 \\
ml:far:10 & 1,059.20 & 1,849.97 & 560,983.46 & 1,237.80 & 0.91 & 1.32 \\
ml:rand:10 & 1,111.68 & 1,918.12 & 583,215.51 & 1,235.16 & 0.91 & 1.30 \\
ml:near:50 & 1,160.96 & 1,807.49 & 490,234.73 & 6,147.18 & 0.65 & 1.49 \\
exp+col  & 1,556.93 & 1,924.89 & 701,039.03 & 122.17 & 0.46 & 1.20 \\
\bottomrule
\end{tabular}
  \label{tab:ml-overview-all}%
\end{table}%

\begin{table}[tbp]
  \centering
  \caption{Average performance over 14 hard benchmark instances.}
  \setlength{\tabcolsep}{7pt}
\begin{tabular}{lrrrrrr}
\toprule
 & Time (s) & Work & Nodes & Cuts & \multicolumn{2}{c}{Speedup} \\
\cmidrule{6-7} Method &  & & &  & Time & Work \\
\midrule
ml:far:10 & 3,581.44 & 6,378.97 & 1,904,188.59 & 1,190.49 & 1.17 & 1.22 \\
ml:rand:10 & 3,686.43 & 6,517.60 & 1,982,708.30 & 1,191.15 & 1.15 & 1.25 \\
ml:near:10 & 3,826.53 & 6,742.74 & 1,915,780.40 & 1,192.53 & 1.11 & 1.12 \\
expert & 3,981.47 & 6,650.68 & 2,406,732.23 & 116.19 & 1.11 & 1.12 \\
ml:near:50 & 3,648.58 & 6,240.77 & 1,652,977.92 & 5,847.93 & 1.08 & 1.40 \\
ml:near:1 & 3,791.68 & 6,950.06 & 1,938,210.00 & 118.19 & 1.08 & 1.02 \\
baseline & 4,200.30 & 7,012.50 & 2,464,156.55 & --- & 1.00 & 1.00 \\
exp+col & 4,568.36 & 6,650.68 & 2,406,732.23 & 116.19 & 0.91 & 1.12 \\
\bottomrule
\end{tabular}
  \label{tab:ml-overview-hard}%
\end{table}%

Table~\ref{tab:ml-overview-all} summarizes the average performance of the evaluated methods across all 50 benchmark instances.
For each method, \emph{time} includes not only the time Gurobi requires to solve the problem, but also the additional time spent predicting multipliers, converting the test problem to standard form, computing multiple tableaux, and generating cuts before the optimization process begins.
In contrast, \emph{work units} measures only the computational effort spent by Gurobi, in a deterministic way.
The table also includes the average number of branch-and-bound nodes explored and average speedups for both time and work units.
Speedups were calculated by first computing them for each test variation individually, then averaging these results across variations.
The \emph{baseline} row in the table refers to standard Gurobi, with an empty cut callback function.
Additionally, since the \emph{expert}'s time excludes cut collection time, we introduce an \emph{exp+col} row, which adds cut collection time to the \emph{time} column while keeping all other entries unchanged.
Table~\ref{tab:ml-overview-hard} shows similar results, but focuses on 14 hard benchmark instances, defined as those requiring at least five minutes for default Gurobi to solve.
More detailed results are presented in Tables~\ref{tab:mlTime1}, \ref{tab:mlTime2}, \ref{tab:mlWork1}, and \ref{tab:mlWork2}, in the Appendix.
We can make the following observations:
\begin{enumerate}
    \item \emph{All variations of the learning heuristic are quite effective at improving Gurobi's running time} (measured in work units), with the best variant achieving an average speedup of 1.49x across all 50 instances and a 1.40x speedup on the 14 hard instances.
    Every variation, including the simple 1-closest, outperformed Gurobi's default, and most surpassed even the expert method!
    With few exceptions, when the expert method performs well, the learning heuristics achieve similarly strong results.
    \item \emph{More cuts are typically better} (with respect to work units).
    Although there is typically a concern that providing too many cuts to a MILP solver may hinder its performance, we did not observe this issue in our experiments.
    In fact, the best performance was obtained by 50-closest, which adds cuts based on aggregation multipliers from all the training instances.
    \item \emph{Instance similarity does not appear to be a critical factor}.
    All three selection strategies (closest, farthest and random) showed similar performance, as long as cuts were generated from 10 training variations.
    If anything, 10-farthest performed the best in our experiments among these three strategies.
    \item \emph{The heuristic performs well even when right-hand-side (RHS) values are perturbed}.
    When the RHS remains unchanged, one may expect the learning heuristics to perform well, as the feasible region of the problems remain the same.
    However, the real test is how the heuristic handles varying RHS values.
    Of the 50 instances tested, 20 included RHS perturbations, and we observed similar speedup trends in these 20 instances as in the full set of 50.
    Specifically, the arithmetic average of work units for baseline, expert, $10$-farthest, $1$-closest, $10$-closest, $50$-closest, and $10$-random is $2535.55$, $	2414.54$, $2242.60$, $2312.10$, $2343.43$, $2177.83$, and $2327.89$ respectively. 
    \item \emph{The learning heuristic is effective even when considering total running time, particularly for challenging instances.}
    The previous discussion points focus on work units, which measure only the effort spent by Gurobi.
    When we also include the computational time required for predicting and generating cuts, some new insights emerge.
    First, adding cut collection time to the expert method completely nullifies its running time improvements, with an average speedup of just 0.46x across all 50 benchmark instances.
    Even for hard instances, the \emph{exp+col} method still underperforms the baseline, with an average speedup of 0.91x.
    Second, although the 50-closest variant provides the best results in terms of work units, it is less efficient in total running time than other methods, due to the extensive data processing it requires, such as reading additional training data files and computing tableaux for more linear programming bases.
    Methods with lower preprocessing demands, like 10-farthest and 10-random, offer a more balanced performance.
    Nonetheless, all learning heuristic variants outperform the baseline on hard instances, with time speedups between 1.08x and 1.17x.
    Third, our cut generation implementation still suffers from significant overhead, limiting the effectiveness of methods like 10-farthest to hard instances only.
    In fact, the only method able to achieve a positive average time speedup across all 50 benchmark instances is 1-closest, which requires the least preprocessing.
    Much of this overhead comes from recomputing the problem's standard form and multiple tableaux, as we lack access to solver internals containing this information.
    If these methods were directly integrated into Gurobi, we anticipate much lower overhead, making the method potentially useful even for easy instances.
\end{enumerate}

\section{Conclusion and Future Research}\label{sec:con}
In this work, we demonstrated that historical data can be effectively used to generate strong cutting planes and accelerate the performance of a commercial state-of-the-art MILP solver.
There are several open theoretical and computational research directions. 
On the theory side, there are questions regarding generalizing Theorem~\ref{thm:1} (and similar result for split cuts with the same proof; see Theorem~\ref{thm:split} in Appendix). We have established finite list of aggregation multipliers only when the right-hand-side of the instance family lives in a lattice generated by rational vectors. 
What happens when we consider all right-hand-sides is an open question.
Finally, examining extensions of such results for more generals families of cut from the simplex tableau, the so-called group cuts ~\cite{richard2010group} is another interesting direction of research.

On the computational side, we have seen that it is possible to get substantial improvements over default Gurobi, just by re-using the good aggregation multipliers determined for  historical instances. 
This results could perhaps be improved by examining  some interesting extensions. 
First, with more computational power and effort, it may be possible to work with a better expert, which for example generates cuts even from infeasible basis. 
Second, in our study we have explored changing the right-side within a margin of $\pm 10\%$ and the objective coefficient by $\pm 25\%$.
Perhaps, it is possible to explore more dramatic changes. 

\section*{Acknowledgments}

Argonne National Laboratory's work was supported by the U.S. Department of Energy, Office of Electricity, under contract DE-AC02-06CH11357.
Santanu S. Dey would like to gratefully acknowledge the support of ONR grant \#N000142212632.

\bibliographystyle{plain}
\bibliography{ref}

\appendix
\section{Proofs}

Given a vector $u \in \mathbb{R}^k$, let $u^+, u^- \in \mathbb{R}^k_+$ be the vectors defined as: $$u^+_i = \left\{\begin{array}{rl} u_i & \textup{if }u_i \geq 0 \\
  0 &  \textup{if }u_i \leq 0.
  \end{array}\right. \quad u^-_i = \left\{\begin{array}{rl} -u_i & \textup{if }u_i < 0 \\
  0 &  \textup{if }u_i \geq 0.
  \end{array}\right. $$
  Given two inequalities 
  \begin{eqnarray}\label{eq:com1}
  \eta^1x \geq 1,
  \end{eqnarray}
  and 
  \begin{eqnarray}\label{eq:com2}
  \eta^2x \geq 1
  \end{eqnarray}
  valid for \textup{IP} presented in (\ref{eq:standard}), we say that 
  (\ref{eq:com1}) dominates (\ref{eq:com2}) if $\eta^1 \leq \eta^2$. This is because all the variables in $\textup{IP}$ are non-negative. 
  
As discussed in Section~\ref{sec:cutclosure}, split cuts and GMI cuts are very closely related. Our presentation in this section is from Section 5.1 in~\cite{conforti2014integer}. %We say inequality $\alpha^1 x \leq \beta$

\paragraph{Split cuts for $\textup{IP}$.}
Considering a mixed integer set $\textup{IP}$ in standard form as described in (\ref{eq:standard}), any non-dominated split cut for $\textup{IP}$ can be generated as follows. Given a vector $(\lambda, v) \in \mathbb{R}^m \times \mathbb{R}^n$, we will say it is \emph{legitimate} if it satisfies the following three conditions:
\begin{enumerate}
    \item[(1.)] \label{cond1}$\lambda^{\top}A_j - v_j \in \mathbb{Z}$ for $j \in J$,
    \item[(2.)] \label{cond2}$\lambda^{\top}A_j - v_j = 0$ for $j \in [n]\setminus J$,  \item[(3.)]\label{cond3} $\lambda^{\top}b \not\in \mathbb{Z},$ where $f= \lambda^{\top}b - \lfloor \lambda^{\top}b \rfloor.$ 
\end{enumerate}
For a legitimate $(\lambda, v) \in \mathbb{R}^m \times \mathbb{R}^n$, the inequality
\begin{eqnarray}\label{eq:split}
\frac{v^{+}x}{f} + \frac{v^{-}x}{1-f} \geq 1,
\end{eqnarray}
is a split inequality valid for $\textup{IP}$ defined in (\ref{eq:standard}) obtained using the disjunction $$(\pi^{\top} x \leq \pi_0)\vee (\pi^{\top}x \geq \pi_0 +1),$$ where $\pi= \lambda^{\top}A - v$ and $\pi_0 = \lfloor \lambda^{\top}b\rfloor$. 
\paragraph{Connection to GMICs.}
Using the fact that variables in $\textup{IP}$ given in (\ref{eq:standard}) are non-negative, one can easily show the following: (Section 5.1.4 in ~\cite{conforti2014integer}) Given a fixed $\hat{\lambda} \in \mathbb{R}^m$ with $\hat{\lambda}^{\top}b \notin \mathbb{Z}$, one can find one optimal value of $\hat{v}$ such that $(\hat{\lambda}, \hat{v})$ is legitimate, where optimality implies that an inequality of the form (\ref{eq:split})
corresponding to any legitimate pair $(\hat{\lambda}, v)$ is dominated by the inequality (\ref{eq:split})
corresponding to $(\hat{\lambda}, \hat{v})$. This inequality corresponding to $(\hat{\lambda}, \hat{v})$ is precisely the GMIC as described in (\ref{eq:GMIC}) with $\alpha_j = \hat{\lambda}^{\top}A_j$ for $j \in [n]$ and $\beta = \hat{\lambda}^{\top}b$, that is the inequality $GMIC(\textup{IP}_{\hat{\lambda}})$. Thus, all the rank-1 GMICs for the set $\textup{IP}$ are special case of (\ref{eq:split}), where the $v$ vectors is selected optimally for a given $\lambda$.  

We next show that by a  argument similar to that presented in~\cite{dash2010mir} (our presentation is based on~\cite{conforti2014integer}), it is possible to prove that there is a \emph{finite set of aggregation multipliers}
which yield the GMI closure for an infinite family of instances  with varying right-hand-sides. In order to present this result we need the notation presented below.

%Finally, let $A_C$ be the submatrix of $A$ corresponding to continuous variables and $\Delta$ be the maximum among, $1$ and the absolute value of  all subdeterminant of $A_C$.

We begin with a result presented in~\cite{conforti2014integer}, originally shown in~\cite{andersen2005split}, before presenting our main result.
\begin{lemma}\label{lemma:1} (Corollary 5.6 in~\cite{conforti2014integer})
If the inequality (\ref{eq:split}) corresponding to a  $(\lambda, v)$ is  undominated among all possible inequalities of the form (\ref{eq:split}), then the support of $(\lambda, v)$ corresponds to a basis, that is, if $\lambda_{i_1}, \dots, \lambda_{i_k}$ are non-zero and $v_{j_1}, \dots, v_{j_l}$ are non-zero, then $\{a_{i_1}, \dots, a_{i_k}, e_{j_1}, \dots, e_{j_l}\} \in \mathbb{R}^n$ are linearly independent, where $a_{i}$ is the $i^{th}$ row of $A$ and $e_j$ is the $j^{\textup{th}}$ standard basis vector.
\end{lemma}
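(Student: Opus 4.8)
The plan is a proof by contraposition: supposing $\{a_{i_1},\dots,a_{i_k},e_{j_1},\dots,e_{j_l}\}$ is linearly dependent, I will produce a legitimate pair whose inequality~(\ref{eq:split}) dominates the one generated by $(\lambda,v)$, contradicting undominatedness. The first ingredient is to identify the legitimate pairs near $(\lambda,v)$. Because condition~(1) keeps $\lambda^{\top}A_j-v_j$ pinned to a \emph{fixed} integer for $j\in J$ and condition~(2) keeps it at $0$ for $j\notin J$, any \emph{continuous} family of legitimate pairs through $(\lambda,v)$ must hold the disjunction vector $\pi=\lambda^{\top}A-v$ constant. Such families are therefore exactly $\lambda(\epsilon)=\lambda+\epsilon\mu$ and $v_j(\epsilon)=v_j+\epsilon\,\mu^{\top}A_j$ for $\mu\in\mathbb{R}^m$; condition~(3) persists for small $\epsilon$ since $\lambda^{\top}b\notin\mathbb{Z}$, and then $f(\epsilon)=f+\epsilon\,\mu^{\top}b$.

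The second ingredient translates the assumed dependence into an admissible perturbation direction. A short computation shows that $\{\,a_i:\lambda_i\neq 0\,\}\cup\{\,e_j:v_j\neq 0\,\}$ is linearly dependent if and only if there is a nonzero $\mu$ supported on $\{\,i:\lambda_i\neq 0\,\}$ with $\mu^{\top}A_j=0$ for every $j$ with $v_j=0$. Fix such a $\mu$. Then $v_j(\epsilon)=v_j+\epsilon\,\mu^{\top}A_j$ stays zero off the support of $v$ and keeps its sign on the support for small $\epsilon$, so $(\lambda(\epsilon),v(\epsilon))$ is legitimate and its inequality~(\ref{eq:split}) is valid. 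Each coefficient $\eta_j(\epsilon)$ of this family equals $(v_j+\epsilon\,\mu^{\top}A_j)/f(\epsilon)$ when $v_j>0$, equals $-(v_j+\epsilon\,\mu^{\top}A_j)/(1-f(\epsilon))$ when $v_j<0$, and is identically zero when $v_j=0$. In particular each $\eta_j(\cdot)$ is a fractional-linear, hence monotone, function of $\epsilon$ near $0$, so we obtain a smooth one-parameter curve of valid split cuts passing through our cut at $\epsilon=0$.

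It remains to show that this curve lets us strictly dominate the cut. Concretely I would examine the derivative $\eta'(0)$ and seek a sign of $\epsilon$ along which every coefficient is nonincreasing and at least one strictly decreases; by monotonicity of the $\eta_j(\cdot)$ this would produce a dominating legitimate cut, the desired contradiction. \textbf{I expect this sign analysis to be the main obstacle}, because the coefficients are coupled through the common denominator $f(\epsilon)=f+\epsilon\,\mu^{\top}b$: increasing $f$ lowers the coefficients of variables with $v_j>0$ but raises those with $v_j<0$, so $\eta'(0)$ is not sign-definite in general. To overcome this I would first use whatever freedom the dependence subspace provides to choose $\mu$ with $\mu^{\top}b=0$, which decouples the denominator and reduces the question to the signs of the $\mu^{\top}A_j$ on the support of $v$; when no such $\mu$ exists I would instead re-optimize the integer parts defining $v$ (i.e.\ pass to the GMIC-optimal $v$ for the perturbed $\lambda$, as allowed by the connection recalled before Lemma~\ref{lemma:1}) to absorb the change in $f$. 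Equivalently, one may bypass the perturbation entirely by writing the deepest valid inequality~(\ref{eq:split}) for the fixed split as a linear program over $(\lambda,v)$ and reading the claimed linear independence off an optimal basis via complementary slackness.
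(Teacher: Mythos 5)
The paper never proves this lemma: it is imported verbatim as Corollary~5.6 of \cite{conforti2014integer} (originally from \cite{andersen2005split}), so there is no in-paper argument to compare yours against, and your attempt has to stand on its own. Your setup is the standard one and is correct as far as it goes: legitimate pairs vary continuously only along directions that preserve $\pi=\lambda^{\top}A-v$, and the assumed linear dependence is equivalent to a nonzero $\mu$ supported on $\operatorname{supp}(\lambda)$ with $\mu^{\top}A_j=0$ off $\operatorname{supp}(v)$. But the proof stops exactly at the step that carries all the content, and the obstacle you flag is not a technicality --- the mechanism you propose (find a \emph{single} nearby inequality of the form (\ref{eq:split}) that componentwise dominates) provably cannot work in general. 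Consider your own ``decoupled'' case $\mu^{\top}b=0$ with $w_j:=\mu^{\top}A_j$ taking both signs on $\{j:v_j>0\}$. Then $f(\epsilon)\equiv f$ and $\eta(\epsilon)=\eta(0)+\epsilon s$ with $s$ having entries of both signs, so no sign of $\epsilon$ makes every coefficient nonincreasing; in fact $\eta(0)=\tfrac12\bigl(\eta(\epsilon)+\eta(-\epsilon)\bigr)$, so the cut is implied by a convex combination of two neighbours but is componentwise dominated by neither. Any correct proof must therefore either work with the weaker notion of domination (redundancy for the closure, i.e.\ implication by several split cuts together with $x\geq 0$) or use a mechanism other than a first-order perturbation toward a single dominating cut.

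Your proposed patches do not close this gap. Choosing $\mu$ with $\mu^{\top}b=0$ is not always possible (the dependency space may be one-dimensional with $\mu^{\top}b\neq 0$), and, as just noted, it does not help even when it is. ``Re-optimizing $v$ for the perturbed $\lambda$'' changes which branch of $\max\{v_j/f,\,-v_j/(1-f)\}$ is active and destroys the monotonicity you are relying on. The final suggestion --- an LP over $(\lambda,v)$ for the ``deepest'' inequality plus complementary slackness --- does not type-check as written: for a fixed disjunction the coefficients $v_j^+/f+v_j^-/(1-f)$ depend nonlinearly on $\lambda$ through $f=\lambda^{\top}b-\pi_0$, and ``deepest'' is undefined without a point to separate or an objective. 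If you want a self-contained argument, follow \cite{andersen2005split}: for a fixed split disjunction the irredundant inequalities are intersection cuts read off basic solutions of the associated cut-generating system, and the linear independence of $\{a_{i_1},\dots,a_{i_k},e_{j_1},\dots,e_{j_l}\}$ is exactly the statement that the generating solution is basic.
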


\begin{theorem}
Let $\Gamma$ be the lattice generated by rational vectors $b^1, \dots, b^k \in \mathbb{Q}^m$. Consider the infinite family of instances corresponding to $\Gamma$ as described in~(\ref{eq:inffam}).
Then 
there exists a finite set $\Lambda \subseteq \mathbb{R}^m$, such that the GMIC closure of every instance $\textup{IP}(\gamma)$ can be obtained using aggregation multipliers in $\Lambda$, that is:
$$\mathcal{G}(\textup{IP}(\gamma)) = \bigcap_{ \lambda \in \Lambda} \textup{GMIC}\left(\textup{IP}(\gamma)_\lambda\right)\quad \forall \gamma \in \Gamma.$$
\end{theorem}
\begin{proof}
We will show that there exists a finite set  $\tilde{\Lambda} = \{(\lambda^1, v^1), \dots, (\lambda^g, v^g)\}$ with its elements satisfying the first two conditions of being legitimate (if third is not satisfied for some $b + \gamma$, then no cut using this $\lambda$ is generated for this instance) and satisfying Lemma~\ref{lemma:1}, 
such that 
any inequality (\ref{eq:split}) generated from any legitimate $(\lambda, v)$
is dominated by an inequality (\ref{eq:split}) corresponding to $(\lambda^i, v^i)$ for some $i \in [g]$. 
Let the projection of $\tilde{\Lambda}$ onto the $\lambda$-component be the finite set $\Lambda$. As discussed above, given a fixed $\hat{\lambda}$, we can then select a corresponding $\hat{v}$ to this $\hat{\lambda}$, which leads to an inequality of the type (\ref{eq:split}) that dominates all other inequalities of the type (\ref{eq:split}) corresponding to legitimate vector $(\hat{\lambda}, v)$  -- this is how all GMICs are generated. Thus, this shows that the GMIC closure is obtained by generating the GMICs corresponding to the elements in the finite list $\Lambda$. 

Let $u = (\lambda, v)$ be a legitimate vector with a support corresponding to a basis (from Lemma~\ref{lemma:1})
for an instance $\textup{IP}(\gamma)$, that is it satisfies (1.), (2.) and $\lambda^{\top}(b + \gamma) \not\in \mathbb{Z}$. 

We define the cone 
\begin{eqnarray*}
\mathcal{C}_{\textup{sign}(u)} = \left\{w = (\theta, \phi)\in \mathbb{R}^{m}\times\mathbb{R}^n \,\left|\,\begin{array}{rcl} \theta^{\top}A_j - \phi_j &=& 0 \ \forall j \in [n]\setminus J \\
w_j &=& 0 \textup{ if }u_j = 0 \\
w_j & \leq& 0 \textup{ if }u_j < 0 \\
w_j & \geq & 0 \textup{ if }u_j > 0.
\end{array}\right.\right\}.
\end{eqnarray*}
We claim that if there exists $u^1 = (\theta^1, \phi^1), u^2 = (\theta^2, \phi^2) \in \mathcal{C}_{\textup{sign}(u)}$ such that 
$u = u^1 + u^2$, where $u^2 \in \mathbb{Z}^{n + m}$ and $(\theta^{2})^{\top}(b + \gamma) \in \mathbb{Z}$, then the inequality corresponding to $u$ is dominated by the inequality corresponding to $u^1$. First we verify that $u^1$ is legitimate that is it satisfies conditions (1.) - (3.). Note that since $A $ is an integral matrix, and $(\theta^2, \phi^2_j) \in \mathbb{Z}^m \times \mathbb{Z}$, for $j \in J$, we have that $\mathbb{Z} \ni \lambda^{\top}A_j - v_j = (\theta^1 + \theta^2)^{\top}A_j - \phi^1_j - \phi^2_2$ implies that $(\phi^1)^{\top}A_j - v_j \in \mathbb{Z},$ that is condition (1.) holds. Also because $u^1 \in C_u$, we have that $u^1$ satisfies  condition (2.). Finally, note that $\lambda^{\top}(b + \gamma) = (\theta^1 + \theta^2)^{\top}(b + \gamma)$ and by assumption $(\theta^2)^{\top}(b + \gamma) \in \mathbb{Z}$, we have $(\theta^1)^{\top}(b + \gamma) \equiv \lambda^{\top}(b + \gamma) (\textup{mod} 1) \equiv f (\textup{mod} 1)$, that is $ (\theta^1)^{\top}(b + \gamma) \not\in \mathbb{Z}$, thus $u^1$ satisfies condition (3.). Finally note that due to constraints defining $\mathcal{C}_{\textup{sign}(u)}$, and because $u = u^1 + u^2$, we have that 
$(\phi^1)^+ \leq v^+$ and $(\phi^1)^- \leq v^-$.
Therefore, the inequality corresponding to $u^1$:
$$\frac{((\phi^1)^+)^{\top}x}{f}+ \frac{((\phi^1)^-)^{\top}x}{1-f}\geq 1,$$
dominates the inequality corresponding to $u = (\lambda, v)$.

Let $b^1, \dots, b^k$ be the generators of $\Gamma$. Let  $M(\Gamma) \in \mathbb{Z}_{+}$ be the smallest positive integer such that $M(\Gamma)\cdot b^i \in \mathbb{Z}^m$ for all $i \in [k].$ Note that this implies that $M(\Gamma)\cdot \gamma \in \mathbb{Z}^m $ for all $\gamma \in \Gamma.$ Let 
%$A_C$ be the submatrix of $A$ corresponding to continuous variables and let 
$\Delta$ be the maximum among the absolute value of all sub-determinant of the matrix corresponding to the left-hand-side of constraints defining $\mathcal{C}_{\textup{sign}(u)}$.
%, that is, the matrix:
%$$\left[\begin{array}{c}\begin{array}{cc} A_C & -I\end{array}\\ \begin{array}{c} I\end{array}\end{array}\right] $$. 
Let $H:= (m+ n)\cdot M(\Gamma)\cdot \Delta$.

We next show that for all legitimate $u \in \mathbb{R}^{m+n}$ there exists 
$u^1,u^2 \in \mathcal{C}_{\textup{sign}(u)}$ such that $u=u^1+u^2$ and $u^2 \in \mathbb{Z}^{m+n}$, $(\theta^{2})^{\top}(b + \gamma) \in \mathbb{Z}$ for all $\gamma \in \Gamma$, 
where $u^1$ satisfies $-H\leq u^1_j \leq H$ for all $j \in [m+ n]$ and $u^1$s support is contained in the support of $u$. The key to this proof depends on the fact that $\mathcal{C}_{\textup{sign}(u)}$ does not depend on the right-hand-side and that $M(\Gamma)\cdot \gamma \in \mathbb{Z}^m $ for all $\gamma \in \Gamma.$ First note that $\mathcal{C}_{\textup{sign}(u)}$ is a pointed cone and let $r^1, \dots, r^k$ be the extreme rays of $\mathcal{C}_{\textup{sign}(u)}$. By standard arguments (see Chapter 3 in~\cite{schrijver1998theory}) we can re-scale $r^1, \dots, r^k$ such that they are integral vectors and $\|r^t\|_{\infty} \leq \Delta$ for all $t \in [k]$. Let us now further re-scale these vectors by $M(\Gamma),$ that is we assume:
\begin{itemize}
\item $r^t \in \mathbb{Z}^{m+n}$ for all $t \in [k]$,
\item Each entry of $r^t$ is divisible by $M(\Gamma)$ for all $t \in [k]$,
\item $\|r^t\|_{\infty} \leq M(\Gamma)\cdot\Delta$.
\end{itemize}
Now since $u \in \mathcal{C}_{\textup{sign}(u)}$, we have that $u = \sum_{t = 1}^k \psi_tr^t$, where by Carath\'eodory's Theorem at most $m + n $ of the $\psi$s are positive. Let $u^1 = \sum_{t = 1}^k (\psi_t - \lfloor \psi_t\rfloor)\cdot r^t$ and let $u^2 = \sum_{t = 1}^k \lfloor \psi_t\rfloor\cdot r^t$. Then clearly $u^1, u^2 \in \mathcal{C}_{\textup{sign}(u)}$, $u^2 \in \mathbb{Z}^{m+n}$ and each entry of $u^2$ is divisible by $M(\Gamma)$ for all $\gamma \in \Gamma$. Thus $(\theta^2)^{\top}(b + \gamma) \in \mathbb{Z}$. Finally, since at most $n+m$ of $\psi_t$s are positive, we have that $\|u^1\|_{\infty} \leq (m+n)\cdot M(\gamma)\cdot\Delta = H$.

Based on the results above, we conclude that non-dominated inequalities (\ref{eq:split}) for all instances $\textup{IP}(\gamma)$ for $\gamma \in \Gamma$ are generated corresponding to legitimate vectors $u = (\lambda, v)$ with a support corresponding to a basis and $\|u\|_{\infty} \leq H.$ Therefore, we obtain that the set:
$$\Pi:= \{ \pi \in \mathbb{Z}^n\,|\, \pi = \lambda^{\top}A - v^{\top}I, \pi_j = 0 \ \forall j \in [n]\setminus J, \|\lambda\|_{\infty}\leq H, \|v\|_{\infty}\leq H  \},$$
is finite. Thus, all the non-dominated split cuts (\ref{eq:split}) can be generated using the split disjunctions whose left-hand-side belong to the $\Pi$. 

Finally, note that for a given $\pi \in \Pi$, the solution to the system $\pi = \lambda^{\top}A - u^{\top}I$ where the support of $(\lambda, u)$ corresponds to a basis is a unique solution. Thus the number of non-dominated inequalities of the form (\ref{eq:split}) can be obtained from a finite list of $u = (\lambda, v)$s. This completes the proof. 
\end{proof}

The proof above closely follows arguments presented in~\cite{conforti2014integer}.

\begin{theorem}
Let $\Gamma$ be the lattice generated by rational vectors $b^1, \dots, b^k \in \mathbb{Q}^m$. Consider the infinite family of instances corresponding to $\Gamma$ as described in~(\ref{eq:inffam}).
Then 
there exists a finite set $\Lambda \subseteq \mathbb{R}^m$, such that the GMIC closure of every instance $\textup{IP}(\gamma)$ can be obtained using aggregation multipliers in $\Lambda$, that is:
$$\mathcal{G}(\textup{IP}(\gamma)) = \bigcap_{ \lambda \in \Lambda} \textup{GMIC}\left(\textup{IP}(\gamma)_\lambda\right)\quad \forall \gamma \in \Gamma.$$
\end{theorem}
\begin{proof}
We will show that there exists a finite set  $\tilde{\Lambda} = \{(\lambda^1, v^1), \dots, (\lambda^g, v^g)\}$ with its elements satisfying the first two conditions of being legitimate (if third is not satisfied for some $b + \gamma$, then no cut using this $\lambda$ is generated for this instance) and satisfying Lemma~\ref{lemma:1}, 
such that 
any inequality (\ref{eq:split}) generated from any legitimate $(\lambda, v)$
is dominated by an inequality (\ref{eq:split}) corresponding to $(\lambda^i, v^i)$ for some $i \in [g]$. 
Let the projection of $\tilde{\Lambda}$ onto the $\lambda$-component be the finite set $\Lambda$. As discussed above, given a fixed $\hat{\lambda}$, we can then select a corresponding $\hat{v}$ to this $\hat{\lambda}$, which leads to an inequality of the type (\ref{eq:split}) that dominates all other inequalities of the type (\ref{eq:split}) corresponding to legitimate vector $(\hat{\lambda}, v)$  -- this is how all GMICs are generated. Thus, this shows that the GMIC closure is obtained by generating the GMICs corresponding to the elements in the finite list $\Lambda$. 

Let $u = (\lambda, v)$ be a legitimate vector with a support corresponding to a basis (from Lemma~\ref{lemma:1})
for an instance $\textup{IP}(\gamma)$, that is it satisfies (1.), (2.) and $\lambda^{\top}(b + \gamma) \not\in \mathbb{Z}$. 

We define the cone 
\begin{eqnarray*}
\mathcal{C}_{\textup{sign}(u)} = \left\{w = (\theta, \phi)\in \mathbb{R}^{m}\times\mathbb{R}^n \,\left|\,\begin{array}{rcl} \theta^{\top}A_j - \phi_j &=& 0 \ \forall j \in [n]\setminus J \\
w_j &=& 0 \textup{ if }u_j = 0 \\
w_j & \leq& 0 \textup{ if }u_j < 0 \\
w_j & \geq & 0 \textup{ if }u_j > 0.
\end{array}\right.\right\}.
\end{eqnarray*}
We claim that if there exists $u^1 = (\theta^1, \phi^1), u^2 = (\theta^2, \phi^2) \in \mathcal{C}_{\textup{sign}(u)}$ such that 
$u = u^1 + u^2$, where $u^2 \in \mathbb{Z}^{n + m}$ and $(\theta^{2})^{\top}(b + \gamma) \in \mathbb{Z}$, then the inequality corresponding to $u$ is dominated by the inequality corresponding to $u^1$. First we verify that $u^1$ is legitimate that is it satisfies conditions (1.) - (3.). Note that since $A $ is an integral matrix, and $(\theta^2, \phi^2_j) \in \mathbb{Z}^m \times \mathbb{Z}$, for $j \in J$, we have that $\mathbb{Z} \ni \lambda^{\top}A_j - v_j = (\theta^1 + \theta^2)^{\top}A_j - \phi^1_j - \phi^2_2$ implies that $(\phi^1)^{\top}A_j - v_j \in \mathbb{Z},$ that is condition (1.) holds. Also because $u^1 \in C_u$, we have that $u^1$ satisfies  condition (2.). Finally, note that $\lambda^{\top}(b + \gamma) = (\theta^1 + \theta^2)^{\top}(b + \gamma)$ and by assumption $(\theta^2)^{\top}(b + \gamma) \in \mathbb{Z}$, we have $(\theta^1)^{\top}(b + \gamma) \equiv \lambda^{\top}(b + \gamma) (\textup{mod} 1) \equiv f (\textup{mod} 1)$, that is $ (\theta^1)^{\top}(b + \gamma) \not\in \mathbb{Z}$, thus $u^1$ satisfies condition (3.). Finally note that due to constraints defining $\mathcal{C}_{\textup{sign}(u)}$, and because $u = u^1 + u^2$, we have that 
$(\phi^1)^+ \leq v^+$ and $(\phi^1)^- \leq v^-$.
Therefore, the inequality corresponding to $u^1$:
$$\frac{((\phi^1)^+)^{\top}x}{f}+ \frac{((\phi^1)^-)^{\top}x}{1-f}\geq 1,$$
dominates the inequality corresponding to $u = (\lambda, v)$.

Let $b^1, \dots, b^k$ be the generators of $\Gamma$. Let  $M(\Gamma) \in \mathbb{Z}_{+}$ be the smallest positive integer such that $M(\Gamma)\cdot b^i \in \mathbb{Z}^m$ for all $i \in [k].$ Note that this implies that $M(\Gamma)\cdot \gamma \in \mathbb{Z}^m $ for all $\gamma \in \Gamma.$ Let 
%$A_C$ be the submatrix of $A$ corresponding to continuous variables and let 
$\Delta$ be the maximum among the absolute value of all sub-determinant of the matrix corresponding to the left-hand-side of constraints defining $\mathcal{C}_{\textup{sign}(u)}$.
%, that is, the matrix:
%$$\left[\begin{array}{c}\begin{array}{cc} A_C & -I\end{array}\\ \begin{array}{c} I\end{array}\end{array}\right] $$. 
Let $H:= (m+ n)\cdot M(\Gamma)\cdot \Delta$.

We next show that for all legitimate $u \in \mathbb{R}^{m+n}$ there exists 
$u^1,u^2 \in \mathcal{C}_{\textup{sign}(u)}$ such that $u=u^1+u^2$ and $u^2 \in \mathbb{Z}^{m+n}$, $(\theta^{2})^{\top}(b + \gamma) \in \mathbb{Z}$ for all $\gamma \in \Gamma$, 
where $u^1$ satisfies $-H\leq u^1_j \leq H$ for all $j \in [m+ n]$ and $u^1$s support is contained in the support of $u$. The key to this proof depends on the fact that $\mathcal{C}_{\textup{sign}(u)}$ does not depend on the right-hand-side and that $M(\Gamma)\cdot \gamma \in \mathbb{Z}^m $ for all $\gamma \in \Gamma.$ First note that $\mathcal{C}_{\textup{sign}(u)}$ is a pointed cone and let $r^1, \dots, r^k$ be the extreme rays of $\mathcal{C}_{\textup{sign}(u)}$. By standard arguments (see Chapter 3 in~\cite{schrijver1998theory}) we can re-scale $r^1, \dots, r^k$ such that they are integral vectors and $\|r^t\|_{\infty} \leq \Delta$ for all $t \in [k]$. Let us now further re-scale these vectors by $M(\Gamma),$ that is we assume:
\begin{itemize}
\item $r^t \in \mathbb{Z}^{m+n}$ for all $t \in [k]$,
\item Each entry of $r^t$ is divisible by $M(\Gamma)$ for all $t \in [k]$,
\item $\|r^t\|_{\infty} \leq M(\Gamma)\cdot\Delta$.
\end{itemize}
Now since $u \in \mathcal{C}_{\textup{sign}(u)}$, we have that $u = \sum_{t = 1}^k \psi_tr^t$, where by Carath\'eodory's Theorem at most $m + n $ of the $\psi$s are positive. Let $u^1 = \sum_{t = 1}^k (\psi_t - \lfloor \psi_t\rfloor)\cdot r^t$ and let $u^2 = \sum_{t = 1}^k \lfloor \psi_t\rfloor\cdot r^t$. Then clearly $u^1, u^2 \in \mathcal{C}_{\textup{sign}(u)}$, $u^2 \in \mathbb{Z}^{m+n}$ and each entry of $u^2$ is divisible by $M(\Gamma)$ for all $\gamma \in \Gamma$. Thus $(\theta^2)^{\top}(b + \gamma) \in \mathbb{Z}$. Finally, since at most $n+m$ of $\psi_t$s are positive, we have that $\|u^1\|_{\infty} \leq (m+n)\cdot M(\gamma)\cdot\Delta = H$.

Based on the results above, we conclude that non-dominated inequalities (\ref{eq:split}) for all instances $\textup{IP}(\gamma)$ for $\gamma \in \Gamma$ are generated corresponding to legitimate vectors $u = (\lambda, v)$ with a support corresponding to a basis and $\|u\|_{\infty} \leq H.$ Therefore, we obtain that the set:
$$\Pi:= \{ \pi \in \mathbb{Z}^n\,|\, \pi = \lambda^{\top}A - v^{\top}I, \pi_j = 0 \ \forall j \in [n]\setminus J, \|\lambda\|_{\infty}\leq H, \|v\|_{\infty}\leq H  \},$$
is finite. Thus, all the non-dominated split cuts (\ref{eq:split}) can be generated using the split disjunctions whose left-hand-side belong to the $\Pi$. 

Finally, note that for a given $\pi \in \Pi$, the solution to the system $\pi = \lambda^{\top}A - u^{\top}I$ where the support of $(\lambda, u)$ corresponds to a basis is a unique solution. Thus the number of non-dominated inequalities of the form (\ref{eq:split}) can be obtained from a finite list of $u = (\lambda, v)$s. This completes the proof. 
\end{proof}

The proof above closely follows arguments presented in~\cite{conforti2014integer}.

\begin{corollary}
  Consider $\textup{IP}(\gamma)$ as defined in  (\ref{eq:inffam}), where we allow $\gamma \in \mathbb{R}^m$. Let 
  $J = [n]$, that is all variables are integral. Then there exists a finite set of multipliers $\Gamma$ such that 
  $$\mathcal{G}(\textup{IP}(\gamma)) = \bigcap_{ \lambda \in \Lambda} \textup{GMIC}\left(\textup{IP}(\gamma)_\lambda\right)\quad \forall \gamma \in \mathbb{R}^m.$$
\end{corollary}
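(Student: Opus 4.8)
The plan is to reduce the corollary to Theorem~\ref{thm:1} by exploiting a feature special to the pure-integer case: for $J = [n]$ the set $\textup{IP}(\gamma)$ contains an integer point only when its right-hand-side lies in a fixed lattice, so the ``interesting'' values of $\gamma$ already form a lattice, while every remaining $\gamma$ yields an empty set whose emptiness can be certified by a single degenerate GMIC. Concretely, write $\Lambda_A := \{Ax : x \in \mathbb{Z}^n\}$ for the lattice generated by the columns of $A$ and $L^\ast := \{\lambda \in \mathbb{R}^m : A^\top \lambda \in \mathbb{Z}^n\}$ for its dual lattice (assuming, without loss of generality, that $A$ has full row rank, so that $\Lambda_A$ is full-rank and $L^\ast$ is a lattice of rank $m$). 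Since $x \in \mathbb{Z}^n$ forces $Ax \in \mathbb{Z}^m$, the integer points of $\textup{IP}(\gamma)$ are nonempty only if $b + \gamma \in \Lambda_A$, and by lattice duality $(L^\ast)^\ast = \Lambda_A$; hence $b + \gamma \notin \Lambda_A$ if and only if $\lambda^\top (b+\gamma) \notin \mathbb{Z}$ for some $\lambda \in L^\ast$.

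For the first (feasible) regime I would apply Theorem~\ref{thm:1} with the lattice $\Gamma := \Lambda_A$ (generated by the integral, hence rational, columns of $A$) and base right-hand-side $0$. This yields a finite set $\Lambda_1 \subseteq \mathbb{R}^m$ such that the GMIC closure of $\{x \in \mathbb{R}^n_+ : Ax = d\}$ is generated by the multipliers in $\Lambda_1$ for every $d \in \Lambda_A$. Since any $\gamma$ with $b + \gamma \in \Lambda_A$ makes $\textup{IP}(\gamma)$ precisely such an instance (the closure depends only on $A$ and on the right-hand-side value), $\Lambda_1$ already handles all feasible right-hand-sides.

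For the second (infeasible) regime I would take $\Lambda_2 := \{\lambda^{(1)}, \dots, \lambda^{(m)}\}$, a generating set of the dual lattice $L^\ast$. For any $\lambda \in L^\ast$ one has $\lambda^\top A_j = (A^\top\lambda)_j \in \mathbb{Z}$, so every fractional part $f(\lambda^\top A_j)$ vanishes and the inequality \eqref{eq:GMIC} for $\textup{IP}(\gamma)_\lambda$ collapses to $0 \geq 1$ whenever $\lambda^\top(b+\gamma) \notin \mathbb{Z}$; that is, the corresponding $\textup{GMIC}$ is the empty half-space. When $b + \gamma \notin \Lambda_A$, the duality above guarantees that at least one generator $\lambda^{(i)}$ satisfies $\lambda^{(i)\top}(b+\gamma) \notin \mathbb{Z}$, so these finitely many multipliers certify $\mathcal{G}(\textup{IP}(\gamma)) = \varnothing$, which matches the true closure because $\textup{IP}(\gamma)$ itself is empty. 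Setting $\Lambda := \Lambda_1 \cup \Lambda_2$ then gives a single finite list valid for all $\gamma \in \mathbb{R}^m$: on feasible right-hand-sides $\Lambda_1$ reproduces the closure while the $\Lambda_2$-cuts degenerate harmlessly (there $\lambda^\top(b+\gamma) \in \mathbb{Z}$, so they contribute only $\mathbb{R}^n_+$), and on infeasible right-hand-sides $\Lambda_2$ certifies emptiness.

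The conceptual crux, and the step I expect to require the most care, is the dichotomy itself: recognizing that allowing $\gamma$ to range over all of $\mathbb{R}^m$ does not genuinely escape the lattice setting of Theorem~\ref{thm:1}, because pure integrality confines every nonempty instance to the lattice $\Lambda_A$. This is exactly where the hypothesis $J = [n]$ is used; with continuous variables present $Ax$ need not be integral, the feasible right-hand-sides no longer form a lattice, and this is precisely why Theorem~\ref{thm:1} cannot be strengthened to arbitrary $\gamma$ in the mixed case. The remaining points are routine verifications: that $(L^\ast)^\ast = \Lambda_A$ and that membership in $\Lambda_A$ can be tested on a finite generating set of $L^\ast$, that the degenerate GMICs behave as claimed under the paper's convention ($\textup{GMIC} = \mathbb{R}^n_+$ when the aggregated right-hand-side is integral, and the empty half-space $0 \geq 1$ otherwise), and that adjoining the $\Lambda_2$-cuts leaves the closure unchanged on the feasible lattice.
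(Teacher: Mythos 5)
Your overall skeleton matches the paper's proof exactly: split on whether the right-hand-side lies in a lattice covered by Theorem~\ref{thm:1}, and off that lattice certify infeasibility with a fixed finite set of aggregation multipliers whose GMICs degenerate to $0 \geq 1$ (all aggregated coefficients integral, aggregated right-hand-side fractional). The difference is in the instantiation. The paper uses the coarse dichotomy ``$b+\gamma \in \mathbb{Z}^m$ or not'': since $A$ and $b$ are assumed integral and $J=[n]$, any $\gamma$ with some $b_i+\gamma_i \notin \mathbb{Z}$ gives an infeasible instance certified by the single multiplier $\lambda = e^i$, and otherwise $\gamma \in \mathbb{Z}^m$ so Theorem~\ref{thm:1} applies with $\Gamma = \mathbb{Z}^m$ (which needs no feasibility or rank assumption). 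You instead use the finer dichotomy ``$b+\gamma \in \Lambda_A$ or not,'' with generators of the dual lattice $L^\ast$ as certificates.

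This finer choice is where a genuine gap appears: your ``without loss of generality, $A$ has full row rank'' is not actually without loss of generality in this setting. If $A$ is rank-deficient, $L^\ast = \{\lambda : A^{\top}\lambda \in \mathbb{Z}^n\}$ contains the kernel of $A^{\top}$, so it is not discrete and has no finite generating set, and your duality argument $(L^\ast)^\ast = \Lambda_A$ tested on finitely many generators breaks down. You also cannot simply delete redundant rows, because $\gamma$ ranges over all of $\mathbb{R}^m$ and may push $b+\gamma$ outside the column span of $A$, a case your construction does not certify with finitely many multipliers. The fix is exactly the paper's coarser dichotomy: work with $\mathbb{Z}^m \supseteq \Lambda_A$ and the unit vectors $e^1,\dots,e^m$; instances with $b+\gamma \in \mathbb{Z}^m \setminus \Lambda_A$ are still infeasible, but Theorem~\ref{thm:1} makes no feasibility assumption and covers them anyway, so nothing is lost and the rank issue never arises. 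The rest of your argument (the degenerate GMIC computation, membership in a lattice tested via integer combinations of generators, and the observation that the extra certifying multipliers are harmless on the feasible lattice) is correct.
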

\begin{proof}
  Let $\Delta^1$ be the finite set obtained from Theorem~\ref{thm:1} where $\Gamma = \mathbb{Z}^m$, that is $\Gamma$ is generated by $e^1, e^2, \dots, e^m$. 
  Also let $\Delta^2 := \{ e^1, e^2, \dots, e^m\}.$ Let $\Delta = \Delta^1 \cup \Delta^2$. We consider two cases:
  \begin{itemize}
  \item $b_i + \gamma_i \not \in \mathbb{Z}$ for some $i \in [m]$. In that case, the integer program is infeasible and we obtain the same conclusion by applying the GMIC cut to the $i^{\textup{th}}$ constraint. This is the GMIC corresponding to the aggregation multiplier $\lambda = e^i$. Thus, this GMIC is from the list $\Delta$ of aggregation multipliers, which gives the integer hull (and thus the GMIC closure).
  \item If $b_i + \gamma_i \in \mathbb{Z}$ for all $i \in [m]$, then the result follows from Theorem~\ref{thm:1}. 
  \end{itemize}
\end{proof}

Using techniques similar to the proof of Theorem~\ref{thm:1}, we can show a similar result about split closures where we define split closure (in an equivalent way) using only the left-hand-sides of the disjunctions~\cite{cook1990chvatal}:

\begin{theorem}\label{thm:split}
Let $\Gamma$  be the lattice generated by rational vectors $b^1, \dots, b^k \in \mathbb{Q}^m$. Let  $P(\gamma):= \{ x\in \mathbb{R}^n \,|\, Ax \leq b + \gamma\}$ for $\gamma \in \Gamma$ be a family of rational polyhedron and we are interested in split closures for $Q(\gamma) = P(\gamma) \cap \{x \,|\, x_j \in \mathbb{Z}, j \in J\}$. Let $P(\gamma)^{\pi} =\textup{conv}(P(\gamma) \cap \{x \in \mathbb{R}^n\,|\,\pi^{\top}x \in \mathbb{Z}\})$, where $\pi \in \mathbb{Z}^n$ and the support of $\pi$ belongs to $J$. Then there exists a finite set $\Pi:= \{\pi^1, \dots, \pi^g\}$, with $\pi^i \in \mathbb{Z}^n, \pi^i_j = 0 \ \forall j \not\in J$ for all $i \in [g]$, such that:
$$\textup{split closure}(P(\gamma)):= \bigcap_{\pi \in \mathbb{Z}^n, \pi_j = 0, j \not\in J}P(\gamma)^{\pi} = \bigcap_{\pi \in \Pi}P(\gamma)^{\pi}, \ \forall \gamma \in \Gamma.$$
\end{theorem}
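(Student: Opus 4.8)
The plan is to transplant the cone-decomposition argument from the proof of Theorem~\ref{thm:1} into the inequality-description setting of $P(\gamma)=\{x\in\mathbb{R}^n\,|\,Ax\le b+\gamma\}$, working directly with the disjunction directions $\pi$ rather than with legitimate pairs $(\lambda,v)$. First I would record the reduction: since $P(\gamma)^{\pi}=\textup{conv}\big(P(\gamma)\cap\{\pi^{\top}x\in\mathbb{Z}\}\big)$ depends only on the integer vector $\pi$, it suffices to exhibit a finite set $\Pi$ of directions such that, for every $\gamma\in\Gamma$, every non-dominated split cut of $Q(\gamma)$ is already implied by a disjunction in $\Pi$. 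A non-dominated split cut from a disjunction $(\pi^{\top}x\le\pi_0)\vee(\pi^{\top}x\ge\pi_0+1)$ is generated by nonnegative multipliers on the two sides, i.e.\ a valid $\alpha^{\top}x\le\beta$ with $\alpha=A^{\top}\lambda^1+\mu^1\pi=A^{\top}\lambda^2-\mu^2\pi$, $\lambda^1,\lambda^2,\mu^1,\mu^2\ge0$. As in Lemma~\ref{lemma:1}, I would first argue that only generating vectors $u=(\lambda^1,\lambda^2,\mu^1,\mu^2,\pi)$ whose support corresponds to a basis of the underlying system need be considered.

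For each sign pattern of such a basis-supported $u$, I would define a cone $\mathcal{C}_{\textup{sign}(u)}$ cut out by the consistency equations relating the two expressions for $\alpha$ together with the prescribed sign constraints on the coordinates. The essential point, exactly as in Theorem~\ref{thm:1}, is that this cone depends only on $A$ and the sign pattern, and \emph{not} on the right-hand-side $b+\gamma$. Letting $M(\Gamma)$ be the least positive integer with $M(\Gamma)\,b^i\in\mathbb{Z}^m$ for all $i$ (so $M(\Gamma)\,\gamma\in\mathbb{Z}^m$ for every $\gamma\in\Gamma$) and $\Delta$ the largest absolute subdeterminant of the matrix defining $\mathcal{C}_{\textup{sign}(u)}$, I would rescale the extreme rays of $\mathcal{C}_{\textup{sign}(u)}$ to be integral with every entry divisible by $M(\Gamma)$ and $\|\cdot\|_{\infty}\le M(\Gamma)\Delta$, and then apply Carath\'eodory's theorem to write $u=u^1+u^2$ with $u^2$ integral, its $\pi$-component divisible by $M(\Gamma)$, and $\|u^1\|_{\infty}\le H:=(m+n)\,M(\Gamma)\,\Delta$.

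The crux is then to show that the reduced direction $\pi^1:=\pi-\pi^2$ extracted from $u^1$ yields a split cut dominating the one from $\pi$ \emph{simultaneously for all} $\gamma\in\Gamma$. Here the divisibility of the integral piece $u^2$ by $M(\Gamma)$ is what makes it removable uniformly: because $M(\Gamma)\gamma\in\mathbb{Z}^m$, the constant terms arising from $u^2$ on each side of the disjunction shift by integers for every $\gamma$ at once, mirroring the congruence $(\theta^1)^{\top}(b+\gamma)\equiv\lambda^{\top}(b+\gamma)\ (\textup{mod }1)$ used in Theorem~\ref{thm:1} — and it is precisely this step that requires $\gamma$ to range over a lattice rather than all of $\mathbb{R}^m$. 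With this in hand, every non-dominated split cut of every $Q(\gamma)$ comes from a basis-supported integer direction $\pi^1$ with $\pi^1_j=0$ for $j\notin J$ and $\|\pi^1\|_{\infty}\le H$; there are finitely many such vectors, and collecting them gives the desired finite set $\Pi$.

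I expect the main obstacle to be exactly this domination/transfer step. In Theorem~\ref{thm:1} the domination was immediate from the coefficientwise inequalities $(\phi^1)^+\le v^+$ and $(\phi^1)^-\le v^-$, which relied on the non-negativity $x\ge0$ of the standard form; here $P(\gamma)$ is a general polyhedron with free variables and a two-sided disjunction whose constant $\pi_0$ is not part of $\pi$, so I must verify that decomposing the direction interacts correctly with the \emph{free} choice of $\pi_0$ on each side and that $M(\Gamma)$ is the right modulus to annihilate the integer shift for every $\gamma\in\Gamma$ at once. A secondary, more routine, difficulty is establishing the correct analog of Lemma~\ref{lemma:1} for the inequality-form polyhedron $P(\gamma)$, so that the generating vectors may be taken basis-supported and hence boundable by subdeterminants of $A$ uniformly in $\gamma$.
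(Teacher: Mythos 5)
First, note that the paper itself offers no written proof of Theorem~\ref{thm:split}: it only asserts that the result follows ``using techniques similar to the proof of Theorem~\ref{thm:1}.'' Your high-level plan --- reduce to finitely many disjunction directions $\pi$, use a sign-pattern cone independent of the right-hand-side, rescale extreme rays by $M(\Gamma)$, and apply Carath\'eodory to split off an integral part whose contribution is an integer shift for every $\gamma\in\Gamma$ simultaneously --- is exactly the transplant the authors have in mind, and your observation that $P(\gamma)^{\pi}$ already quantifies away $\pi_0$ correctly disposes of one of the worries you raise.

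However, the concrete parameterization you propose breaks the machinery at its first step. You take $u=(\lambda^1,\lambda^2,\mu^1,\mu^2,\pi)$ subject to the consistency equation $A^{\top}\lambda^1+\mu^1\pi=A^{\top}\lambda^2-\mu^2\pi$; this equation is \emph{bilinear} in the unknowns because of the products $\mu^1\pi$ and $\mu^2\pi$, so the set $\mathcal{C}_{\textup{sign}(u)}$ you define is not a polyhedral cone, has no extreme rays bounded by subdeterminants, and does not admit the Carath\'eodory decomposition $u=u^1+u^2$ on which the entire argument rests. The proof of Theorem~\ref{thm:1} avoids this precisely by working with the legitimate pairs $(\lambda,v)$, for which all defining conditions are linear. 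The correct way to ``transplant'' is therefore to first linearize: introduce slacks $s=b+\gamma-Ax\geq 0$, so that $Q(\gamma)$ becomes a standard-form system $Ax+Is=b+\gamma$ with $x$ free ($x_j\in\mathbb{Z}$ for $j\in J$) and $s\geq 0$ continuous. A legitimate pair for this system forces $v$ to vanish on the free $x$-coordinates and to equal $\lambda$ on the slack coordinates, so every non-dominated split cut takes the form $\frac{(\lambda^+)^{\top}s}{f}+\frac{(\lambda^-)^{\top}s}{1-f}\geq 1$ with disjunction direction $\pi=\lambda^{\top}A$; domination is then coefficient-wise in the $s$-coefficients because $s\geq 0$, which restores exactly the inequality $(\phi^1)^+\leq v^+$, $(\phi^1)^-\leq v^-$ that you correctly identified as the crux, and the cone lives in $\lambda$-space alone, where the $M(\Gamma)$, $\Delta$, and $H$ bounds of Theorem~\ref{thm:1} apply verbatim and yield finitely many admissible $\pi=\lambda^{\top}A\in\mathbb{Z}^n$ supported on $J$. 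Without this linearization your sketch cannot be executed as written, so the gap is genuine, though the repair stays entirely within the toolkit of the paper's proof of Theorem~\ref{thm:1}.
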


\section{Cut Collection Algorithm} \label{section:algorthm}

For approximating the GMIC closure, we modified the relax-and-cut approach developed by  Fischetti and Salvagnin~\cite{fischetti2011relax}. The relax-and-cut approach is a method to generate a large number of simplex tableau corresponding to different basic feasible solutions which yield useful GMICs. 

In our implementation, we considerably  simplified the algorithm from that  originally proposed in~\cite{fischetti2011relax}.
Let $\text{C($B$)}$ denote the set of GMICs collected and retained from a tableau associated with a particular basis, $B$, of LP-relaxation of (\ref{eq:standard}). That is
\begin{equation*}
\text{C($B$)}\coloneq \left\{(\alpha^j_B)^{\top} x \geq \alpha_{0B}^j, \;\;\; j=1,\cdots, q(B)\right\}.
\end{equation*}

Our method iteratively goes through the following steps: 
\begin{enumerate}
\item 
Assuming we have already visited basis $B^1, \dots, B^k$, we have thus collected and retained the cuts in the set $\bigcup_{l = 1}^k \text{C($B^l$)}$. Let $c^{\top}x$ be the objective function of our IP. We solve the LP where we add all these GMICs, that is we solve:
\begin{eqnarray}\label{eq:LPCuts}
\begin{array}{rcl}
 &   \textup{min}& c^{\top}x \\
 &&Ax = b,  x\geq 0,\\
 && (\alpha^j_{B^l})^{\top} x \geq \alpha_{0{B^l}}^j \ \forall j = 1, \dots, q(B^l), \ \forall l \in [k].
 \end{array}
\end{eqnarray}
\item Let $u^j_{B^l}$ be an optimal dual solution of the above LP corresponding to the constraint $(\alpha^j_{B^l})^{\top} x \geq \alpha_{0{B^l}}^j$. 
We now construct the following Lagrangian relaxation to the previous LP:
\begin{eqnarray}\label{eq:Lag}
\begin{array}{rcl}
 &   \textup{min}& c^{\top}x + \sum_{l \in [k]}\sum_{j \in q(B^l)}u^j_{B^l} (\alpha_{0{B^l}}^j - (\alpha^j_{B^l})^{\top} x)\\
 &&Ax = b,  x\geq 0.
 \end{array}
\end{eqnarray}
Notice that the optimal objective function value of both the above LPs is equal. 

\item We solve (\ref{eq:Lag}), and let $B^{k +1}$ be the optimal basis. We collect a subset of GMICs corresponding to this optimal tableaux, call these cuts 
$C(B^{k +1})$. See details of which cuts are collected from a tableaux below. For cuts selected from the previously visited basis, we discard  those with zero dual value, i.e. we update $C(B)^l$ for $l \in \{1, \dots, k\}$ by removing cut $j$ if $u^j_{B^l} =0$ and update $q(B^l)$ accordingly. Then we  go to step 1, with $k \leftarrow k + 1$.
\end{enumerate}
The method is initialized by the GMICs from the optimal tableaux of the LP relaxation.
We terminate if the optimal objective function value of (\ref{eq:LPCuts}) for two consecutive iterations  does not change. 
Otherwise, we repeat the above iteration for at most $K$ iterations. In our experiments, we set $K = 10$. 

Finally, after $K$ rounds, we solve (\ref{eq:LPCuts}) and select the cuts with positive dual multipliers.  These GMICs are our approximation of the GMIC closure.

\paragraph{Rule for selection of GMICs from a tableaux.} We sort the right-hand-sides of the tableaux  with respect to the amount of fractionality (defined for $u \in \mathbb{R}$ as $\textup{min} \{u - \lfloor u \rfloor, \lceil u \rceil  - u\}$). Let $\hat{q}$ be the the number of rows with basic integer variables and the fractionality being greater than $0.001$. We let $q(B) = \textup{min} \{ \hat{q}, 500\}$. We then select the GMICs corresponding to the top $q(B)$ rows in terms of fractionality.

\begin{remark}
    Our method is considerably simpler to implement than the one proposed in Fischetti-Salvagnin~\cite{fischetti2011relax}. In particular, the paper~\cite{fischetti2011relax} presents various variants where potentially more basic feasible solutions are visited by solving the Lagrangian dual of (\ref{eq:LPCuts}) and generating cuts from all/subset of the optimal tablueax corresponding to solving (\ref{eq:Lag}) with $u$'s not being the optimal dual multiplier - but being the intermediary values obtained while solving the Lagrangian Dual of (\ref{eq:LPCuts}). However, as the paper states, this method requires significant engineering of the gradient updates to solve the Lagrangian dual. Moreover, as we see in the next section, we found that the cuts discovered by our simple implementation already improved the performance of a sizable number of instances. 
\end{remark}

\section{Benchmark Set Generation}\label{sec:InsPerturb}
\subsubsection{Right-hand-side changes.}

We perturbed the right-hand-sides in two steps. In the first step we used simple rules to produce a  preliminary perturbation of the right-hand-sides. In the second step, we checked if the first step produced any changes or not, or whether the perturbation led to infeasibilities.  

\paragraph{Step 1: Preliminary right-hand-side perturbation.}
We apply the following simple rules for perturbing the right-hand-side values:
\begin{itemize}
    \item \textbf{Rule 1:} If a constraint contains only two variables and exactly one of them is discrete, then perturb the right-hand-side value $b_i$ to $b_i\cdot r$ where $r$ is uniformly randomly selected from $[0.9,1.1]$.  
    \item \textbf{Rule 2:} If a constraint contains more than two variables and at least one of them is continuous, then perturb the right-hand-side value $b_i$ to $b_i\cdot r$ where $r$ is uniformly randomly selected  from $[0.9,1.1]$. We distinguish this rule from the previous, since we wanted to track how often the changes are made to constraints with two variables.  
    \item \textbf{Rule 3:} If a constraint contains only discrete variables, is not an equality constraint and the right-hand-side value is not equal to 1, then perturb the right-hand-side value $b_i$ to $b_i\cdot r$ where $r$ is uniformly randomly selected  from $[0.9,1.1]$.
    \item \textbf{Rule 4:} If a constraint contains only discrete variables and is an equality constraint, then we apply additive perturbation
    of $\{-1, 0, 1\}$ to the right-hand-side.
\end{itemize}
The above rules were designed based on the following guiding principles: 
\begin{enumerate}
   \item Prevent integer infeasibility to occur locally at the constraint level as far as possible:  \textbf{Rule 1} and \textbf{Rule 2} perturb the right-hand-side multiplicatively since a continuous variable is present. \textbf{Rule 3} perturbs the right-hand-side multiplicatively since the constraint is in inequality form. These perturbations may overlook  global implicit constraints on the RHS values. For example, the sum of demands should be lesser than sum of supplies when the constraints represent a network flow. 
    \item Preserve typical combinatorial constraints: special structures such as set partitioning, set covering, and set packing constraints are not changed; see \textbf{Rule 3} and \textbf{Rule 4}.
\end{enumerate}

\paragraph{Step 2: Checking the preliminary right-hand-side changes.}
The perturbations in the previous step may fail due to two reasons:
\begin{enumerate}
    \item Firstly, none of the constraints satisfy the requirements of \textbf{Rules 1}-\textbf{Rule 4}. In this case, we cannot make any right-hand-side changes to this MIPLIB instance. 
    \item Otherwise, we generate $5$ random perturbations. We, then check if these perturbations are integer feasible using Gurobi. If any perturbation is infeasible, then we do not make any right-hand-side changes to this MIPLIB instance. 
\end{enumerate}

\subsubsection{Objective changes.}
We perturb each objective coefficient $c_j$ to $c_j\cdot r$ where $r$ is uniformly randomly selected  from $[0.75,1.25]$. Note that, in some MIPLIB instances, the support of the objective function is on only one variable. Clearly, in this case, the above perturbation does not produce any change. Therefore, for such MIPLIB instances, we do not make any changes to the objective function.

\section{Supplementary Tables}
% [inline block 0: 12 envs, 71072 chars -> data_tex | \begin{longtable}{ll} \caption{List of eliminated instances and reason for elimination.} \label{tab:missing} \\...]


\end{landscape}

\end{document}